\documentclass[11pt, reqno]{amsart}
\oddsidemargin = 0cm \evensidemargin = 0cm \textwidth = 15cm

\usepackage{amsfonts}
\usepackage{amsmath}
\usepackage{amssymb, color}
\usepackage{amscd}
\usepackage[mathscr]{eucal}
\usepackage{url}

\usepackage{mathtools}

\allowdisplaybreaks[1]




\renewcommand*{\bar}{\overline}
\newcommand{\gfp}[1]{\Gamma_p{\left({#1}\right)}}
\newcommand{\biggfp}[1]{\Gamma_p{\bigl({#1}\bigr)}}




\theoremstyle{plain}

\newtheorem{theorem}{Theorem}[section]

\newtheorem{prop}[theorem]{Proposition}
\newtheorem{cor}[theorem]{Corollary}
\theoremstyle{definition}
\newtheorem{defi}[theorem]{Definition}

\numberwithin{equation}{section}

\makeatletter
\def\imod#1{\allowbreak\mkern5mu({\operator@font mod}\,\,#1)}
\makeatother

\makeatletter
\@namedef{subjclassname@2020}{%
  \textup{2020} Mathematics Subject Classification}
\makeatother

%
\begin{document}

\title[Diagonal hypersurfaces with monomial deformation]{The number of $\mathbb{F}_q$-points on diagonal\\hypersurfaces with monomial deformation}

\author{Dermot M\lowercase{c}Carthy}
\address{Dermot M\lowercase{c}Carthy, Department of Mathematics \& Statistics\\
Texas Tech University\\
Lubbock, TX 79410-1042\\
USA}
\email{dermot.mccarthy@ttu.edu}

\subjclass[2020]{Primary: 11G25, 33E50; Secondary: 11S80, 11T24, 33C99}

\begin{abstract}
We consider the family of diagonal hypersurfaces with monomial deformation
$$D_{d, \lambda, h}: x_1^d  + x_2^d \dots + x_n^d - d \lambda \, x_1^{h_1} x_2^{h_2} \dots x_n^{h_n}=0$$
where $d = h_1+h_2 +\dots + h_n$ with $\gcd(h_1, h_2, \dots h_n)=1$.
We first provide a formula for the number of $\mathbb{F}_{q}$-points on $D_{d, \lambda, h}$ in terms of Gauss and Jacobi sums. This generalizes a result of Koblitz, which holds in the special case ${d \mid {q-1}}$. 
We then express the number of $\mathbb{F}_{q}$-points on $D_{d, \lambda, h}$ in terms of a $p$-adic hypergeometric function previously defined by the author. The parameters in this hypergeometric function mirror exactly those described by Koblitz when drawing an analogy between his result and classical hypergeometric functions. 
This generalizes a result by Sulakashna and Barman, which holds in the case $\gcd(d,{q-1})=1$.
In the special case $h_1 = h_2 = \dots =h_n = 1$ and $d=n$, i.e., the Dwork hypersurface, we also generalize a previous result of the author which holds when $q$ is prime.
\end{abstract}

\maketitle


\section{Introduction}\label{sec_Intro}
Counting the number of solutions to equations over finite fields using character sums dates back to the works of Gauss and Jacobi. A renewed interest in such problems followed subsequent important contributions from Hardy and Littlewood \cite{HL} and Davenport and Hasse \cite{DH}. In his seminal 1949 paper, Weil  \cite{W} gives an exposition on the topic up to that point (as well as going on to make his famous conjectures on the zeta functions of algebraic varieties). Specifically, he develops a formula for the number of solutions over $\mathbb{F}_q$, the finite field with $q$ elements, and its extensions, of $a_0 x_0^{n_0} + a_1 x_1^{n_1} + \dots +  a_k x_k^{n_k} =0$, in terms of what we now call Gauss sums and Jacobi sums. The techniques involved have since become standard practice and can be found in many well-known text books, e.g. \cite{BEW, IR}. Since then, many authors have used and adapted the techniques outlined in Weil's paper to study other equations, e.g. \cite{De, FG, Ko4}. Of particular interest is the work of Koblitz \cite{Ko4} where he examines the family of diagonal hypersurfaces with monomial deformation
\begin{equation}\label{def_Ddlh}
D_{d, \lambda, h}: x_1^d  + x_2^d \dots + x_n^d - d \lambda \, x_1^{h_1} x_2^{h_2} \dots x_n^{h_n}=0
\end{equation}
where $h_i \in \mathbb{Z}^{+}$, with $\gcd(h_1, h_2, \dots h_n)=1$, and $d = h_1+h_2 \dots + h_n$. Koblitz's main result \cite[Thm.~2]{Ko4} gives a formula for the number of $\mathbb{F}_q$-points on $D_{d, \lambda, h}$ in the terms of Gauss and Jacobi sums, in the case ${d \mid q-1}$. 
Using the analogy between Gauss sums and the gamma function, he notes that the main term in his formula can be considered a finite field analogue of a classical hypergeometric function. The purpose of this paper is to study $D_{d, \lambda, h}$ more generally, i.e., when the condition ${d \mid {q-1}}$ is removed. Firstly, we generalize Koblitz's result and provide a formula for the number of $\mathbb{F}_{q}$-points on $D_{d, \lambda, h}$ in terms of Gauss and Jacobi sums without the condition ${d \mid {q-1}}$. We then express the number of $\mathbb{F}_{q}$-points on $D_{d, \lambda, h}$ in terms of a $p$-adic hypergeometric function previously defined by the author. The parameters in this hypergeometric function mirror exactly those described by Koblitz when drawing an analogy between his result and classical hypergeometric functions. This generalizes a result of Sulakashna and Barman \cite{SB}, which holds in the case $\gcd(d,{q-1})=1$.
We also examine the special case when $h_1= h_2 = \dots h_n = 1$ and $d=n$, i.e., the Dwork hypersurface, and generalize a previous result of the author, which holds when $q$ is prime.


 \section{Statement of Results}\label{sec_Results}
Let $q=p^r$ be a prime power and let $\mathbb{F}_q$ denote the finite field with $q$ elements. 
Let $\widehat{\mathbb{F}^{*}_{q}}$ denote the group of multiplicative characters of $\mathbb{F}^{*}_{q}$.
We extend the domain of $\chi \in \widehat{\mathbb{F}^{*}_{q}}$ to $\mathbb{F}_{q}$ by defining $\chi(0):=0$ (including for the trivial character $\varepsilon$) and denote $\bar{\chi}$ as the inverse of $\chi$. Let $T$ be a fixed generator of $\widehat{\mathbb{F}_q^{*}}$.
Let $\theta$ be a fixed non-trivial additive character of $\mathbb{F}_q$ and for $\chi \in \widehat{\mathbb{F}^{*}_{q}}$ we define the Gauss sum $g(\chi):= \sum_{x \in \mathbb{F}_q} \chi(x) \theta(x)$. For $\chi_1, \chi_2, \dotsc, \chi_k \in \widehat{\mathbb{F}^{*}_{q}}$, we define the Jacobi sum
$J(\chi_1, \chi_2, \dotsc, \chi_k):= \sum_{t_i \in \mathbb{F}_q, t_1+t_2+ \dotsm + t_k=1} \chi_1(t_1) \chi_2(t_2) \dotsm \chi_k(t_k).$

We consider the family of diagonal hypersurfaces with monomial deformation described in (\ref{def_Ddlh}).
Let $t:=\gcd(d, q-1)$
and define
\begin{equation}\label{Def_W}
W:=\{w=(w_1, w_2, \ldots, w_n) \in \mathbb{Z}^n : 0 \leq w_i < t, \sum_{i=1}^n w_i \equiv 0 \pmod t\}. 
\end{equation} 
Define an equivalence relation $\sim_h$ on $W$ by 
\begin{equation}\label{Def_tilde}
w \sim_h w^{\prime} \textup{ if } w- w^{\prime} \textup{ is a multiple modulo $t$ of $h=(h_1,h_2, \ldots, h_n)$}. 
\end{equation}
If $h=(1,1, \dots, 1)$ we will write $\sim_1$.
We denote the class containing $w$ by $[w]$. We note also that each class contains a representative $w$ where some $w_i = 0$, for $1 \leq i \leq n$. We will write $[w_{_0}]$ to indicate that we have chosen such a representative for a particular class.

Our first result provides a formula for the number of $\mathbb{F}_{q}$-points on $D_{d, \lambda, h}$ in terms of Gauss and Jacobi sums, without the condition ${d \mid q-1}$. We will use $\mathbb{A}^{n}(\mathbb{F}_q)$ and $\mathbb{P}^{n}(\mathbb{F}_q)$ to denote the affine and projective $n$-spaces, respectively, over $\mathbb{F}_q$. 
We denote the subset of elements in these spaces where all co-ordinates are non-zero by $\mathbb{A}^{n}(\mathbb{F}_q^{*})$ and $\mathbb{P}^{n}(\mathbb{F}_q^{*})$.
\begin{theorem}\label{thm_2}
Let $N_q(D_{d, \lambda, h})$ be the number of points in $\mathbb{P}^{n-1}(\mathbb{F}_q)$ on $D_{d, \lambda, h}$. Then
\begin{multline*}
N_q(D_{d, \lambda, h}) 
= \frac{q^{n-1}-1}{q-1}
- \sum_{w^{*}} J(T^{w_1 \frac{q-1}{t}}, T^{w_2 \frac{q-1}{t}}, \dots, T^{w_n \frac{q-1}{t}}) \\
+\frac{1}{q-1} \sum_{s, w} \frac{g(T^{w_1 \frac{q-1}{t} + h_1 s}) g(T^{w_2 \frac{q-1}{t} + h_2 s}) \dots g(T^{w_n \frac{q-1}{t} + h_n s})}{g(T^{ds})} \, T^{ds}(d \lambda)
\end{multline*}
where the first sum is over all $w^{*}=(w_1, w_2, \dots, w_n) \in W$ such that $0 < w_i < t$ for all $i$, and the second sum is over all $s \in \{0,1,\dots, \frac{q-1}{t}-1\}$ and all $w=(w_1, w_2, \dots, w_n) \in W$.
\end{theorem}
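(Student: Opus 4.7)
The plan is to adapt Weil's classical character-sum approach. Since $f(x) := x_1^d + \cdots + x_n^d - d\lambda\, x_1^{h_1}\cdots x_n^{h_n}$ is homogeneous of degree $d$, one has $(q-1)\,N_q(D_{d,\lambda,h}) = N^{\mathrm{aff}} - 1$, where $N^{\mathrm{aff}}$ denotes the number of affine $\mathbb{F}_q$-solutions of $f=0$. I would stratify $N^{\mathrm{aff}}$ by the support $S = \{i : x_i \neq 0\}$ of the solution. Because every $h_i \geq 1$, the mixed monomial vanishes whenever any $x_i = 0$, so for $\emptyset \neq S \subsetneq \{1,\dots,n\}$ the stratum counts nonzero solutions of the pure Fermat equation $\sum_{i \in S} x_i^d = 0$, while the full stratum $S = \{1,\dots,n\}$ counts all-nonzero solutions of the full $f = 0$.

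Each stratum is then evaluated using the orthogonality relation that $\sum_{y \in \mathbb{F}_q} \theta(yz)$ equals $q$ when $z = 0$ and vanishes otherwise, together with the Gauss-sum inversion $\theta(z) = \frac{1}{q-1}\sum_\chi g(\bar\chi) \chi(z)$ for $z \in \mathbb{F}_q^{*}$. In a proper stratum the character arising from $\theta(y x_i^d)$ becomes $\chi_i^d$ at $x_i$, and summing $x_i \in \mathbb{F}_q^{*}$ forces $\chi_i^d = \varepsilon$, so $\chi_i = T^{w_i(q-1)/t}$ with $w_i \in \{0,\dots,t-1\}$; the subsequent $y$-sum forces $\prod_i \chi_i = \varepsilon$, i.e.\ $w$ restricted to $S$ satisfies the $W$-congruence. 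Standard Gauss/Jacobi identities aggregate these contributions across all proper $S$ into $\frac{q^{n-1}-1}{q-1}$ (from the trivial-character piece, which collects $(q-1)^{|S|-1}$ over all $\emptyset \neq S \subsetneq \{1,\dots,n\}$) minus $\sum_{w^{*}} J(T^{w_1(q-1)/t},\dots,T^{w_n(q-1)/t})$ (from the nontrivial-character pieces; repackaging produces a single Jacobi sum over full-length $w$ with every entry strictly positive).

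For the full stratum I additionally expand $\theta(-d\lambda y\, x_1^{h_1}\cdots x_n^{h_n})$ via a character $\psi = T^b$. Writing $\chi_i = T^{a_i}$, summing over $x_i$ forces $d a_i + h_i b \equiv 0 \pmod{q-1}$ for each $i$, and summing over $y$ forces $\sum_i a_i + b \equiv 0 \pmod{q-1}$. The first constraint is solvable for $a_i$ only when $t \mid h_i b$ for every $i$; since $\gcd(h_1,\dots,h_n)=1$ this forces $t \mid b$. Setting $b \equiv -ds \pmod{q-1}$ with $s \in \{0,\dots,(q-1)/t - 1\}$ yields $a_i \equiv w_i(q-1)/t + h_i s \pmod{q-1}$ with $w_i \in \{0,\dots,t-1\}$ free, and the $y$-constraint reduces to $\sum w_i \equiv 0 \pmod{t}$, i.e.\ $w \in W$. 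Substituting back, assembling the Gauss sums, and using $T^{\sum_i (w_i(q-1)/t + h_i s)} = T^{ds}$ on $W$ to identify the denominator $g(T^{ds})$ produces the triple sum in the theorem.

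The main obstacle is the careful reindexing from $(a_i,b)$ to $(w_i,s)$ when $d \nmid q - 1$: one must verify that the inverse of $d/t$ modulo $(q-1)/t$ is absorbed correctly so that the character on the deformation parameter emerges as exactly $T^{ds}(d\lambda)$, and that signs from $\psi(-1)$ and from the boundary case of the Gauss-Jacobi relation (when the character product is trivial) combine cleanly. Everything else reduces to standard orthogonality and the relation $J(\chi_1,\dots,\chi_n) = g(\chi_1)\cdots g(\chi_n)/g(\chi_1\cdots\chi_n)$.
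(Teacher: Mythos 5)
Your proposal is correct and follows essentially the same route as the paper: split off the points with some vanishing coordinate (where the deformation monomial dies and Weil's diagonal formula, Theorem \ref{thm_Weil}, applies) from the all-nonzero points, and handle the latter by expanding with additive and multiplicative characters and reindexing $(a_i,b)\mapsto(w_i,s)$, using $\gcd(h_1,\dots,h_n)=1$ to force $t\mid b$ --- which is precisely the content of the paper's Corollary \ref{cor_1}. The one bookkeeping point to watch is the one you flagged: the some-coordinate-zero stratum contributes $N_q(D_{d,0,h})-N_q^{*}(D_{d,0,h})$ rather than the first two terms of the theorem, and the all-nonzero stratum contributes $N_q^{*}(D_{d,\lambda,h})$ rather than the third term; the discrepancy is concentrated in the $s=0$ (trivial character product) terms, where $J=-g(\chi_1)\cdots g(\chi_n)/q$ instead of $g(\chi_1)\cdots g(\chi_n)/g(\varepsilon)$, and the paper's computation \eqref{for_N0Diff} verifies that these corrections cancel exactly as you anticipate.
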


\noindent  
Theorem \ref{thm_2} generalizes Koblitz \cite[Thm.~2]{Ko4}, which holds in the case $d \mid {q-1}$. Using an analogy between Gauss sums and the gamma function, Koblitz notes that the second summand in his formula, which corresponds to the second summand in Theorem \ref{thm_2} above with $t=d$, can be considered a finite field analogy of the classical hypergeometric function
\begin{equation}\label{for_KoblitzHypFn}
\prod_{i=1}^{n}   \Gamma(\tfrac{w_i}{d}) \cdot
 {_{d}F_{d-1}}\biggl[ \begin{array}{ccccc} \multicolumn{5}{c}{\dotsc \dotsc \frac{w_i}{dh_i} + \frac{b_i}{h_i} \dotsc \dotsc} \\[4pt]
 \phantom{1} & \frac{1}{d} & \frac{2}{d} & \dotsc & \frac{d-1}{d} \end{array}
\Big| \; \lambda^d  h_1^{h_1}  \dots h_n^{h_n} \; \biggr]
\end{equation}
where the top line parameters range over all $i=1, \dots, n$ and, for each $i$, all $b_i=0, \dots h_i-1$.
The main purpose of this paper is to express $N_q(D_{d, \lambda, h})$ in terms of a $p$-adic hypergeometric function previously defined by the author, whereby the parameters in this $p$-adic hypergeometric function mirror exactly those described by Koblitz in (\ref{for_KoblitzHypFn}) above.

Next, we rewrite Theorem \ref{thm_2} in a way more amenable to manipulation when we pass to the to the $p$-adic setting.
\begin{cor}\label{cor_thm2}
\begin{multline*}
N_q(D_{d, \lambda, h}) 
= \frac{q^{n-1}-1}{q-1}
-\frac{1}{q} \sum_{\substack{w \in W\\ \text{some $w_i=0$}}} \prod_{i=1}^{n} g(T^{w_i \frac{q-1}{t}}) \\
+\frac{1}{q(q-1)} \sum_{s, w} \prod_{i=1}^{n} g(T^{w_i \frac{q-1}{t} + h_i s}) {g(T^{-ds})} \, T^{ds}(-d \lambda)
\end{multline*}
where the first sum is over all $w=(w_1, w_2, \dots, w_n) \in W$ such that at least one $w_i=0$, and the second sum is over \textbf{either} all $s \in \{0,1,\dots, \frac{q-1}{t}-1\}$ and all $w=(w_1, w_2, \dots, w_n)\in W$ \textbf{or} all $s \in \{0,1,\dots, q-2\}$ and all $w=(w_1, w_2, \dots, w_n)\in W/\sim_h$. In the latter case, the sum is independent of the choice of equivalence class representatives.
\end{cor}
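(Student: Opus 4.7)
The plan is to deduce the corollary from Theorem~\ref{thm_2} via two standard Gauss-sum identities plus a group-action argument for the alternative summation index.

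In the first step I convert the Jacobi-sum term. For each $w^{*} \in W$ with all $w_i > 0$, the characters $T^{w_i(q-1)/t}$ are non-trivial and have trivial product (since $t \mid \sum w_i$). The standard identity $J(\chi_1,\dots,\chi_n) = -\frac{1}{q} g(\chi_1)\cdots g(\chi_n)$ for non-trivial characters with trivial product (a short induction from $g(\chi)g(\bar\chi) = \chi(-1)q$) then turns $-\sum_{w^{*}} J(T^{w_1(q-1)/t},\dots,T^{w_n(q-1)/t})$ into $\frac{1}{q}\sum_{w^{*}}\prod_{i=1}^n g(T^{w_i(q-1)/t})$.

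In the second step I rewrite the third sum. Because $\gcd(d,q-1)=t$, the character $T^{ds}$ is trivial iff $s=0$ in $\{0,\dots,(q-1)/t-1\}$. For $s\neq 0$, applying $g(\chi)g(\bar\chi)=\chi(-1)q$ gives $\frac{1}{g(T^{ds})} T^{ds}(d\lambda) = \frac{g(T^{-ds})}{q} T^{ds}(-d\lambda)$, matching the Corollary's $s\neq 0$ summands. At $s=0$, Theorem~\ref{thm_2} contributes $-\frac{1}{q-1}\sum_{w \in W} \prod_i g(T^{w_i(q-1)/t})$, while the Corollary's $s=0$ term is $-\frac{1}{q(q-1)}\sum_{w \in W} \prod_i g(T^{w_i(q-1)/t})$. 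Combining both full-$W$ pieces with the Jacobi-sum rewriting of Step~1, and using $\sum_{w^{*}} + \sum_{\text{some }w_i=0} = \sum_{W}$ along with $\frac{1}{q}-\frac{1}{q-1}=-\frac{1}{q(q-1)}$, the full-$W$ contributions cancel, leaving only the Corollary's middle term $-\frac{1}{q}\sum_{\text{some }w_i=0}\prod_i g(T^{w_i(q-1)/t})$.

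For the alternative indexing of the third sum, let
\[
E(w,s) := \prod_{i=1}^{n} g(T^{w_i(q-1)/t + h_i s})\,g(T^{-ds})\,T^{ds}(-d\lambda)
\]
and consider the $\mathbb{Z}/t$-action on $W\times\mathbb{Z}/(q-1)$ given by $k\cdot(w,s) = ((w+kh)\bmod t,\,(s-k(q-1)/t)\bmod (q-1))$. Because $t\mid d$, the factor $g(T^{-ds})T^{ds}(-d\lambda)$ is unchanged, and the shift $kh_i(q-1)/t$ in each Gauss-sum argument is absorbed by the corresponding shift in $s$, so $E$ is invariant. The stabilizer of any point is trivial (the $s$-shift forces $k\equiv 0\pmod t$), so every orbit has size exactly $t$. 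Both $\{0,\dots,(q-1)/t-1\}\times W$ and $\{0,\dots,q-2\}\times (W/{\sim_h})$ are fundamental domains for this action: the first since the $s$-coordinates in any orbit differ by multiples of $(q-1)/t$, exactly one of which lies in $\{0,\dots,(q-1)/t-1\}$; the second since $\gcd(h_1,\dots,h_n)=1$ yields $\mathrm{lcm}_i(t/\gcd(t,h_i))=t$, which makes the $\mathbb{Z}/t$-action on $W$ alone free, so each $\sim_h$-class has size $t$ and supplies exactly one representative per orbit. The two sums therefore agree, and $\mathbb{Z}/t$-invariance of $E$ gives independence from the choice of representatives.

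The main obstacle is the bookkeeping in Step~2: tracking how the full-$W$ contributions from the Jacobi-to-Gauss conversion, the $s=0$ term of Theorem~\ref{thm_2}, and the Corollary's $s=0$ summand combine to leave only the ``some $w_i=0$'' piece. The other transitions are routine applications of standard identities.
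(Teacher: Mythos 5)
Your proposal is correct and follows essentially the same route as the paper: convert the Jacobi sums to Gauss sums via the trivial-product case of Proposition \ref{prop_JactoGauss}, absorb $1/g(T^{ds})$ into $g(T^{-ds})/q$ for $s\neq 0$ using (\ref{for_GaussConj}), and merge the $s=0$ and $w^{*}$ pieces so that only the ``some $w_i=0$'' sum survives. Your reindexing argument via the free $\mathbb{Z}/t$-action is just a more explicit packaging of the paper's change of variables $w\to w+jh$, $s\to s+j\frac{q-1}{t}$ (and usefully makes explicit, via $\gcd(h_1,\dots,h_n)=1$, why each $\sim_h$-class has exactly $t$ elements).
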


We now define our $p$-adic hypergeometric function. 
Let $\mathbb{Z}_p$ denote the ring of $p$-adic integers, $\mathbb{Q}_p$ the field of $p$-adic numbers, $\bar{\mathbb{Q}_p}$ the algebraic closure of $\mathbb{Q}_p$, and $\mathbb{C}_p$ the completion of $\bar{\mathbb{Q}_p}$. 
Let $\mathbb{Z}_q$ be the ring of integers in the unique unramified extension of $\mathbb{Q}_p$ with residue field $\mathbb{F}_q$.
Recall that for each $x \in \mathbb{F}_q^{*}$, there is a unique Teichm\"{u}ller representative ${\omega(x) \in \mathbb{Z}^{\times}_q}$ such that $\omega(x)$ is a $({q-1})$-st root of unity and $\omega(x) \equiv x \pmod p$. Therefore, we define the Teichm\"{u}ller character to be the primitive character $\omega:  \mathbb{F}_q^{*}  \rightarrow\mathbb{Z}^{\times}_q$ given by $x \mapsto \omega(x)$, which we extend with $\omega(0):=0$.

\begin{defi}\cite[Definition 5.1]{McC7}\label{def_Gq}
Let $q=p^r$ for $p$ an odd prime. Let $\lambda \in \mathbb{F}_q$, $m \in \mathbb{Z}^{+}$ and, for $1 \leq i \leq m$, $a_i, b_i \in \mathbb{Q} \cap \mathbb{Z}_p$.
Then define  
\begin{multline*}
{_{m}G_{m}}
\biggl[ \begin{array}{cccc} a_1, & a_2, & \dotsc, & a_m \\
 b_1, & b_2, & \dotsc, & b_m \end{array}
\Big| \; \lambda \; \biggr]_q
: = \frac{-1}{q-1}  \sum_{s=0}^{q-2} 
(-1)^{sm}\;
\bar{\omega}^s(\lambda)\\
\times \prod_{i=1}^{m} 
\prod_{k=0}^{r-1} 
\frac{\biggfp{\langle (a_i -\frac{s}{q-1} )p^k \rangle}}{\biggfp{\langle a_i p^k \rangle}}
\frac{\biggfp{\langle (-b_i +\frac{s}{q-1}) p^k \rangle}}{\biggfp{\langle -b_i p^k\rangle}}
(-p)^{-\lfloor{\langle a_i p^k \rangle -\frac{s p^k}{q-1}}\rfloor -\lfloor{\langle -b_i p^k\rangle +\frac{s p^k}{q-1}}\rfloor}.
\end{multline*}
\end{defi}
\noindent
We note that the value of ${_{m}G_{m}}[\cdots]$ depends only on the fractional part of the $a_i$ and $b_i$ parameters, and is invariant if we change the order of the parameters. Our main result expresses $N_q(D_{d, \lambda, h})$ in terms of this function.

\begin{theorem}\label{thm_Main}
Let $q=p^r$ for $p$ an odd prime.
Then, for $p \nmid dh_1 \dots h_n$,
\begin{multline*}
N_q(D_{d, \lambda, h}) 
= \frac{q^{n-1}-1}{q-1}
-\frac{(-1)^n}{q} \sum_{\substack{w \in W \\ \text{some $w_i=0$}}}
C(w)\\
\qquad \qquad 
+\frac{(-1)^n}{q} \sum_{[w] \in W/\sim_h} 
C(w) \;
{_{d}G_{d}}\biggl[ \begin{array}{ccccc} \multicolumn{5}{c}{\dotsc \dotsc \frac{w_i}{th_i} + \frac{b_i}{h_i} \dotsc \dotsc} \\[4pt]
 1 & \frac{1}{d} & \frac{2}{d} & \dotsc & \frac{d-1}{d} \end{array}
\Big| \; \bigl(\lambda^d  h_1^{h_1}  \dots h_n^{h_n} \bigr)^{-1} \; \biggr]_q
\end{multline*}
where the top line parameters in ${_{d}G_{d}}$ are the list $\left[ \frac{w_i}{th_i} + \frac{b_i}{h_i} \mid i=1, \dots, n; b_i=0,1,\dots h_i-1\right]$ and
\begin{equation}\label{for_Cw}
C(w) :=  \prod_{i=1}^{n}  \prod_{a=0}^{r-1}  \biggfp{\langle (\tfrac{w_i}{t}) p^a \rangle} (-p)^{\langle (\tfrac{w_i}{t} ) p^a \rangle}.
\end{equation}
\end{theorem}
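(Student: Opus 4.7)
The plan is to take Corollary \ref{cor_thm2} in its second form (indexing the double sum by $s\in\{0,1,\dots,q-2\}$ and $w\in W/\sim_h$), fix the generator $T=\bar\omega$, and translate every Gauss sum into $p$-adic $\Gamma$-values via the Gross--Koblitz formula
$$g(\bar\omega^j)=-(-p)^{\sigma(j)}\prod_{k=0}^{r-1}\Gamma_p\!\left(\left\langle \tfrac{jp^k}{q-1}\right\rangle\right),\qquad \sigma(j):=\sum_{k=0}^{r-1}\left\langle \tfrac{jp^k}{q-1}\right\rangle.$$
The hypothesis $p\nmid dh_1\cdots h_n$ ensures every $\Gamma_p$-argument encountered below is well defined.

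The boundary contribution is then immediate: for each $w\in W$ with some $w_i=0$, applying Gross--Koblitz slot by slot yields
$$\prod_{i=1}^n g(\bar\omega^{w_i(q-1)/t})=(-1)^n\prod_{i=1}^n\prod_{k=0}^{r-1}\Gamma_p\!\left(\left\langle\tfrac{w_ip^k}{t}\right\rangle\right)(-p)^{\langle w_ip^k/t\rangle}=(-1)^nC(w),$$
with the $w_i=0$ slots contributing the trivial factor $\Gamma_p(0)=1$. Combined with the $-1/q$ prefactor of Corollary \ref{cor_thm2}, this is exactly the second term of Theorem \ref{thm_Main}.

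For the main sum I fix $s\in\{0,\dots,q-2\}$ and $[w]\in W/\sim_h$ and apply Gross--Koblitz to every Gauss sum appearing in $\prod_i g(\bar\omega^{h_is+w_i(q-1)/t})\cdot g(\bar\omega^{-ds})\cdot \bar\omega^{ds}(-d\lambda)$. The key step is then to use the Gauss multiplication formula for $\Gamma_p$ (valid because $p\nmid h_i$ and $p\nmid d$) to split each $\Gamma_p\bigl(\langle(h_is/(q-1)+w_i/t)p^k\rangle\bigr)$ as an $h_i$-fold product of $\Gamma_p$-values at the shifted arguments $(a_{i,b}-s/(q-1))p^k$ with $a_{i,b}:=w_i/(th_i)+b/h_i$, $b=0,\dots,h_i-1$, and likewise to split $\Gamma_p\bigl(\langle -dsp^k/(q-1)\rangle\bigr)$ into a $d$-fold product indexed by the bottom parameters $\{1,1/d,\dots,(d-1)/d\}$ (using $\langle\cdot\rangle\equiv\langle\cdot+1\rangle$ to identify $0$ with $1$). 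The auxiliary constants produced by these multiplication formulas are precisely the denominator factors $\Gamma_p(\langle a_ip^k\rangle)$ and $\Gamma_p(\langle -b_ip^k\rangle)$ of Definition \ref{def_Gq}, together with $h_i^{h_is}$ and $d^{-ds}$ scalars that bundle with $\bar\omega^{ds}(-d\lambda)$ to yield $\bar\omega^s\bigl((\lambda^dh_1^{h_1}\cdots h_n^{h_n})^{-1}\bigr)$.

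It remains to verify that the rearranged expression matches Definition \ref{def_Gq}. The $n+1$ Gross--Koblitz signs, together with signs coming from the multiplication formula, produce both the overall $(-1)^n/q$ prefactor and the $(-1)^{sd}$ appearing inside ${_dG_d}$; the residual factors $\Gamma_p(\langle w_ip^k/t\rangle)(-p)^{\langle w_ip^k/t\rangle}$, which do not depend on $s$, pull outside the $s$-sum to form $(-1)^nC(w)$; and the $(-p)$-exponents agree after repeatedly invoking $\langle x+y\rangle=\langle x\rangle+\langle y\rangle-\lfloor\langle x\rangle+\langle y\rangle\rfloor$ to rewrite the $\sigma$-type exponents as the floor-function exponents of Definition \ref{def_Gq}. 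The main obstacle is precisely this bookkeeping: tracking the multiplicative constants produced by the $\Gamma_p$ multiplication formula, and verifying by careful floor-function arithmetic that the $(-p)$-exponents in the resulting expression coincide with those prescribed by Definition \ref{def_Gq}.
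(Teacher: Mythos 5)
Your plan is essentially the paper's own proof: start from Corollary \ref{cor_thm2} with the sum over $s\in\{0,\dots,q-2\}$ and $[w]\in W/\sim_h$, set $T=\bar{\omega}$, apply Gross--Koblitz (Theorem \ref{thm_GrossKoblitz}) to every Gauss sum, expand via the $\Gamma_p$ multiplication formula (Theorem \ref{thm_GrossKoblitzMult}) with $h=h_i$ and $h=d$, and then match the $(-p)$-exponents against Definition \ref{def_Gq} by floor-function bookkeeping, with the $s$-independent residue giving $C(w)$. The only step you leave implicit is the change of variable $s\mapsto (q-1)-s$, which is what simultaneously turns the shifted arguments $\bigl(\tfrac{w_i}{th_i}+\tfrac{b}{h_i}+\tfrac{s}{q-1}\bigr)p^k$ produced by the multiplication formula into the form $\bigl(a_{i,b}-\tfrac{s}{q-1}\bigr)p^k$ required by Definition \ref{def_Gq} and inverts the argument to $\bigl(\lambda^d h_1^{h_1}\cdots h_n^{h_n}\bigr)^{-1}$; your stated endpoint is consistent with having performed it.
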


\noindent As we can see, the parameters of ${_{d}G_{d}}$ in Theorem \ref{thm_Main} mirror exactly those in (\ref{for_KoblitzHypFn}) (when ${d \mid q-1}$ and so $t=d$) up to inversion of the argument $\lambda^d  h_1^{h_1}  \dots h_n^{h_n}$. This inversion is a feature of the definition of the function ${_{m}G_{m}}$. Because we are summing over $W/\sim_h$, we can remove this inversion while also swapping the top and bottom line parameters, which gives a more natural representation, in the opinion of the author. This can be seen more clearly later, in Corollary \ref{cor_DworkMain2}, where we get an all integral bottom line parameters.

\begin{cor}\label{cor_Main1}
Let $q=p^r$ for $p$ an odd prime. Then, for $p \nmid dh_1 \dots h_n$,
\begin{multline*}
N_q(D_{d, \lambda, h}) 
= \frac{q^{n-1}-1}{q-1}
-\frac{(-1)^n}{q}  \sum_{\substack{w \in W \\ \text{some $w_i=0$}}}
C(w)\\
+\frac{(-1)^n}{q} \sum_{[w] \in W/\sim_h}
C(-w) \;
{_{d}G_{d}}\biggl[ \begin{array}{ccccc} 1 & \frac{1}{d} & \frac{2}{d} & \dotsc & \frac{d-1}{d} \\[4pt]
 \multicolumn{5}{c}{\dotsc \dotsc \frac{w_i}{th_i} + \frac{b_i}{h_i} \dotsc \dotsc} \end{array}
\Big| \; \lambda^d  h_1^{h_1}  \dots h_n^{h_n}  \; \biggr]_q.
\end{multline*}
\end{cor}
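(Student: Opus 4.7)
The plan is to derive Corollary~\ref{cor_Main1} directly from Theorem~\ref{thm_Main}: the first two summands are identical, so only the final $_dG_d$ term and its coefficient need to be transformed. The idea is to combine an involutive symmetry of $_mG_m$---swapping top and bottom lists with sign change at the cost of inverting the argument---with the involution $w \mapsto -w$ on $W/\sim_h$.

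First I would establish the identity
\[
{_{m}G_{m}}\biggl[ \begin{array}{c} a_1, \dotsc, a_m \\ b_1, \dotsc, b_m \end{array} \Big| \; \lambda \; \biggr]_q
= {_{m}G_{m}}\biggl[ \begin{array}{c} -b_1, \dotsc, -b_m \\ -a_1, \dotsc, -a_m \end{array} \Big| \; \lambda^{-1} \; \biggr]_q
\]
by making the substitution $s \mapsto q-1-s$ in Definition~\ref{def_Gq}. Since $p$ is odd, $q-1$ is even, so $(-1)^{sm}$ is preserved; and $\bar\omega^s(\lambda) \mapsto \bar\omega^s(\lambda^{-1})$ by periodicity of $\omega$. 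The shift $s/(q-1) \mapsto 1 - s/(q-1)$, combined with invariance of $\langle \cdot \rangle$ under integer translations, carries $\Gamma_p(\langle(a_i - s/(q-1))p^k\rangle)$ to $\Gamma_p(\langle(-b_i' + s/(q-1))p^k\rangle)$ with $b_i' = -a_i$, and $\Gamma_p(\langle(-b_i + s/(q-1))p^k\rangle)$ to $\Gamma_p(\langle(a_i' - s/(q-1))p^k\rangle)$ with $a_i' = -b_i$. The floor exponents pick up $+p^k$ from each $a_i$-factor and $-p^k$ from each $b_i$-factor, which cancel within each $(i,k)$-term of the product, yielding the identity.

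Applied to the $_dG_d$ of Theorem~\ref{thm_Main}, this symmetry inverts the argument to $\lambda^d h_1^{h_1}\dotsm h_n^{h_n}$. The bottom list $\{1, \tfrac1d, \dotsc, \tfrac{d-1}{d}\}$ negated modulo $1$ gives back the same multiset, so the new top matches. The new bottom becomes $\{-\tfrac{w_i}{th_i} - \tfrac{b_i}{h_i}\}$. I would then reindex the outer sum via $w \mapsto -w \pmod t$; this is a well-defined involution on $W/\sim_h$, since $W$ is closed under componentwise negation mod $t$ and $w-w' \equiv mh$ is equivalent to $(-w) - (-w') \equiv -mh \pmod t$. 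After reindexing, $C(w)$ becomes $C(-w)$ and the bottom parameters become $\{\tfrac{w_i}{th_i} - \tfrac{b_i}{h_i}\}$. Finally, the identity of multisets $\{-b_i/h_i \bmod 1\} = \{b_i/h_i \bmod 1\}$ for $b_i \in \{0, \dotsc, h_i-1\}$, together with the invariance of $_dG_d$ under fractional-part equivalence and reordering of its parameters, lets us rewrite the bottom as $\{\tfrac{w_i}{th_i} + \tfrac{b_i}{h_i}\}$, which is exactly the form of Corollary~\ref{cor_Main1}.

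The hard part will be the careful tracking of the extra $\pm p^k$ contributions to the floor exponents when proving the $_mG_m$ symmetry in the first step; once that is in hand, the remaining manipulations are essentially combinatorial reindexing and multiset identifications.
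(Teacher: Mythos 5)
Your proposal is correct and takes essentially the same route as the paper, whose proof is exactly the combination of the substitution $s \to (q-1)-s$ in the defining sum of ${_{d}G_{d}}$ with the reindexing $w \to -w \pmod t$ on $W/\sim_h$. Your packaging of the first step as a general top/bottom-swap symmetry of ${_{m}G_{m}}$, with the verification that the $\pm p^k$ shifts in the floor exponents cancel and that the parameter multisets agree modulo $1$, just makes explicit the details the paper leaves to the reader.
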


Ideally, in Theorem \ref{thm_Main} and Corollary \ref{cor_Main1}, we would like to combine both sums into a single hypergeometric term. In general, it seems that this is not possible. However, it can be achieved in two special cases as we see in the next two results. The first is when $\gcd(d, q-1)=1$ and the second is when all $h_i=1$, i.e., the Dwork hypersurface.

\begin{cor}\label{cor_Main2}
Let $q=p^r$ for $p$ an odd prime. If $\gcd(d, q-1)=1$ then, for $p \nmid dh_1 \dots h_n$,
\begin{equation*}
N_q(D_{d, \lambda, h}) 
= \frac{q^{n-1}-1}{q-1}
+(-1)^n
{_{d-1}G_{d-1}}\biggl[ \begin{array}{cccc} \frac{1}{d} & \frac{2}{d} & \dotsc & \frac{d-1}{d} \\[4pt]
 \multicolumn{4}{c}{\dotsc \frac{b_i}{h_i}  \dotsc} \end{array}
\Big| \; \lambda^d  h_1^{h_1}  \dots h_n^{h_n}  \; \biggr]_q
\end{equation*}
where the bottom line parameters in ${_{d-1}G_{d-1}}$ are the list $[\frac{b_i}{h_i} \mid i=1, \dots, n; b_i=0,1,\dots h_i-1]$ with exactly one zero removed.
\end{cor}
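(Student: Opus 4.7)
The plan is to derive Corollary \ref{cor_Main2} from Corollary \ref{cor_Main1} in two moves: first specialise $\gcd(d,q-1)=1$ to collapse the indexing sets, then perform a one-pair reduction ${_{d}G_{d}} \to {_{d-1}G_{d-1}}$. Under the hypothesis $t := \gcd(d,q-1) = 1$, so $W$ in (\ref{Def_W}) reduces to $\{(0,\dots,0)\}$ and $W/\sim_h$ is the single class $[(0,\dots,0)]$. Since $\gfp{0}=1$ and every exponent in (\ref{for_Cw}) vanishes, $C(0)=C(-0)=1$. Corollary \ref{cor_Main1} therefore simplifies to
\[
N_q(D_{d,\lambda,h}) \;=\; \frac{q^{n-1}-1}{q-1} \;+\; \frac{(-1)^n}{q}\bigl({_{d}G_{d}}[\cdots]_q - 1\bigr),
\]
where ${_{d}G_{d}}$ has top row $1,\tfrac{1}{d},\dots,\tfrac{d-1}{d}$ and bottom row $\{b_i/h_i \mid 1 \le i \le n,\ 0 \le b_i < h_i\}$ (containing $n$ copies of $0$), evaluated at $\lambda^d h_1^{h_1}\cdots h_n^{h_n}$.

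The crux is the reduction identity
\[
{_{d}G_{d}}[\cdots]_q \;=\; 1 + q\cdot{_{d-1}G_{d-1}}[\cdots]_q,
\]
in which the top $1\equiv 0$ is cancelled against one of the bottom zeros, leaving exactly the parameters of Corollary \ref{cor_Main2}; substituting this identity back delivers the stated formula at once. To prove it, use invariance under permutation to place the matched pair $(a,b)=(0,0)$ at $i=1$ in Definition \ref{def_Gq} and isolate the corresponding factor
\[
F(s) := \prod_{k=0}^{r-1} \gfp{\langle -\tfrac{sp^k}{q-1}\rangle}\,\gfp{\langle \tfrac{sp^k}{q-1}\rangle}\,(-p)^{-\lfloor -sp^k/(q-1)\rfloor - \lfloor sp^k/(q-1)\rfloor}.
\]
Trivially $F(0)=1$. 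For $1 \le s \le q-2$, since $p \nmid q-1$ the value $sp^k/(q-1)$ is never an integer, so each floor-pair sum equals $-1$ and produces $(-p)^r = (-1)^r q$ overall; meanwhile the Gross--Koblitz formula combined with the identities $g(\chi)g(\bar\chi)=\chi(-1)q$, $\omega(-1)=-1$, $\pi^{p-1}=-p$, and $\sum_{k=0}^{r-1}\langle \pm sp^k/(q-1)\rangle = r$ yields $\prod_k \gfp{\langle sp^k/(q-1)\rangle}\gfp{\langle -sp^k/(q-1)\rangle} = (-1)^{s+r}$. Therefore $F(s)=(-1)^s q$ for $s\ne 0$.

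Splitting the outer sum in ${_{d}G_{d}}$ at $s=0$ now completes the proof: the $s=0$ piece contributes $-1/(q-1)$, while in each $s \ne 0$ term, pulling out $F(s)=(-1)^s q$ converts the prefactor sign $(-1)^{sd}$ into $(-1)^{s(d-1)}$ and supplies a factor $q$. After restoring the $s=0$ contribution of the smaller hypergeometric, the remaining inner sum equals $-(q-1)\,{_{d-1}G_{d-1}}-1$, and a one-line rearrangement produces ${_{d}G_{d}} = 1 + q\,{_{d-1}G_{d-1}}$, completing the reduction. The main obstacle is the evaluation $F(s)=(-1)^s q$, which rests on the reflection identity $g(\chi)g(\bar\chi)=\chi(-1)q$ together with Gross--Koblitz; everything else is bookkeeping.
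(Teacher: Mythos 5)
Your proposal is correct and takes essentially the same route as the paper: specialize Corollary \ref{cor_Main1} at $t=\gcd(d,q-1)=1$ so that $W=\{(0,\dots,0)\}$ and $C(0)=1$, then cancel the matched top/bottom zero pair via the identity ${_{d}G_{d}}=1+q\cdot{_{d-1}G_{d-1}}$, whose isolated factor the paper likewise evaluates (via Gross--Koblitz and (\ref{for_GaussConj})) as $g(\bar{\omega}^{-s})g(\bar{\omega}^{s})=\bar{\omega}^{s}(-1)\,q$ for $s\neq 0$ and $1$ for $s=0$. Your explicit computation of $F(s)=(-1)^{s}q$ is just an unpacking of that same step.
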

\noindent Corollary \ref{cor_Main2} is Theorem 1.2 of \cite{SB}.

When $h_1= h_2 = \dots h_n = 1$ and $d=n$ in (\ref{def_Ddlh}), we recover the Dwork hypersurface, which we will denote $D_{\lambda}$, i.e.,
\begin{equation*}\label{def_Dl}
D_{\lambda}: x_1^n  + x_2^n \dots + x_n^n - n \lambda \, x_1 x_2 \dots x_n=0.
\end{equation*}
We now provide formulas for the number of $\mathbb{F}_{q}$-points on $D_{\lambda}$, first in terms of Gauss and Jacobi sums, and then in terms of the $p$-adic hypergeometric function.
For a given $w=(w_1, w_2, \ldots, w_n) \in W$, define $n_k$ to be the number of $k$'s appearing in $w$, i.e.,
$n_k = |\{w_i \mid  1 \leq i \leq n, w_i=k \} |.$
We then let $S_w := \{ k \mid 0 \leq k \leq {t-1}, n_k=0 \}$ and $S_w^{c}$ denote its complement in $\{0,1 \cdots, {t-1}\}$. So the elements of $S_w$ are the numbers from $0$ to ${t-1}$, inclusive, which do not appear in $w$.
We define the following lists
\begin{equation}\label{def_Aw}
A_w: \left[ \tfrac{t-k}{t} \mid k \in S_w \right] \cup \left[ \tfrac{b}{n} \mid 0 \leq b \leq n-1, b \not\equiv 0 \imod{\tfrac{n}{t}} \right];
\end{equation}
\begin{equation}\label{def_Bw}
B_w: \left[  \tfrac{t-k}{t} \, \textup{repeated $n_k$-1 times} \mid k \in S_w^c \right] .
\end{equation}
We note both lists contain 
$n-|S_w^c|$
numbers.

\begin{cor}[Corollary to Theorem \ref{thm_2}]\label{cor_thm2_Dwork}
Let $N_q(D_{\lambda})$ be the number of points in $\mathbb{P}^{n-1}(\mathbb{F}_q)$ on $D_{\lambda}$. Let $t=\gcd(n,q-1)$. Then, for $\lambda \neq0$,
\begin{multline*}
N_q(D_{\lambda}) 
= \frac{q^{n-1}-1}{q-1}\\
+\frac{1}{q(q-1)}\sum_{s, w} 
\left[\prod_{k \in S_w^c} \frac{g(T^{k \frac{q-1}{t} + s})^{n_k-1}}{g(T^{-k \frac{q-1}{t} - s})} T^{k \frac{q-1}{t}+s}(-1) \, q \right]
{g(T^{-ns})} \, T^{ns}(-n \lambda).
\end{multline*}
where the sum is over \textbf{either} all $s \in \{0,1,\dots, \frac{q-1}{t}-1\}$ and all $w=(w_1, w_2, \dots, w_n)\in W$ \textbf{or} all $s \in \{0,1,\dots, q-2\}$ and all $w=(w_1, w_2, \dots, w_n)\in W/\sim_1$. In the latter case, the sum is independent of the choice of equivalence class representatives.
\end{cor}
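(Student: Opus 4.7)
The plan is to specialize Corollary \ref{cor_thm2} to the case $h_1 = h_2 = \dots = h_n = 1$ and $d = n$, then consolidate the two sums on its right-hand side into a single sum by applying the Gauss sum identity $g(\chi)g(\bar{\chi}) = \chi(-1)\,q$, which is valid for non-trivial $\chi$.

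First, I would group the factors in $\prod_{i=1}^n g(T^{w_i\frac{q-1}{t}+s})$ by the multiplicity $n_k$ of each value $k$ appearing in $w$, so that the product becomes $\prod_{k \in S_w^c} g(T^{k\frac{q-1}{t}+s})^{n_k}$ (the factors for $k \in S_w$ drop out, since $n_k = 0$). For each $k \in S_w^c$, I peel off one Gauss sum factor and, provided the character $T^{k\frac{q-1}{t}+s}$ is non-trivial, substitute $g(T^{k\frac{q-1}{t}+s}) = T^{k\frac{q-1}{t}+s}(-1)\,q\,/\,g(T^{-k\frac{q-1}{t}-s})$. This puts each factor into the form appearing inside the sum on the right-hand side of Corollary \ref{cor_thm2_Dwork}.

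The main obstacle is the exceptional pair $(k,s) = (0,0)$, the unique pair in $\{0,\dots,t-1\} \times \{0,\dots,\tfrac{q-1}{t}-1\}$ for which $T^{k\frac{q-1}{t}+s}$ is trivial. At this pair, which contributes only when $0 \in S_w^c$ (that is, when $w$ has some zero coordinate), a naive application of the substitution evaluates to $(-1)^{n_0} q$ in place of the correct value $g(\varepsilon)^{n_0} = (-1)^{n_0}$, introducing a spurious factor of $q$. A short direct computation shows that, once multiplied by the prefactor $\tfrac{1}{q(q-1)}$ and by $g(T^{-ns})\,T^{ns}(-n\lambda)\big|_{s=0} = -1$ (where $\lambda \neq 0$ ensures $\varepsilon(-n\lambda) = 1$), this overcount summed over all such $w$ equals $-\tfrac{1}{q}\sum_{w:\,\text{some } w_i = 0}\prod_{i=1}^n g(T^{w_i\frac{q-1}{t}})$, which is precisely the first sum appearing in Corollary \ref{cor_thm2}.

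Consequently, the rewritten double sum absorbs exactly that first sum, and the identity claimed in Corollary \ref{cor_thm2_Dwork} drops out. The assertion that the sum may equivalently be taken over $s \in \{0,\dots,q-2\}$ with $w \in W/\sim_1$, independently of the choice of representatives, is then inherited directly from the analogous assertion in Corollary \ref{cor_thm2}.
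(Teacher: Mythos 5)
Your proposal is correct and follows essentially the same route as the paper: specialize Corollary \ref{cor_thm2} to $h=(1,\dots,1)$, $d=n$, apply $g(\chi)g(\bar{\chi})=\chi(-1)q$ to rewrite the double sum, isolate the single exceptional pair $(k,s)=(0,0)$ where the character is trivial, and check that the resulting discrepancy of $-\tfrac{1}{q}\sum_{w:\,\text{some }w_i=0}\prod_i g(T^{w_i\frac{q-1}{t}})$ exactly absorbs the first sum of Corollary \ref{cor_thm2}. The paper organizes the same bookkeeping by splitting off the $s=0$ term and separately recasting the first sum, but the computation is identical, and your handling of the trivial-character correction and of the alternative summation range matches the paper's.
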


\begin{theorem}\label{thm_Dwork}
Let $q=p^r$ for $p$ an odd prime. Let $N_q(D_{\lambda})$ be the number of points in $\mathbb{P}^{n-1}(\mathbb{F}_q)$ on
$D_{\lambda}$
for some $\lambda \in \mathbb{F}_q^{*}$. Let $t=\gcd(n,q-1)$
and let $C(w)$ be defined by (\ref{for_Cw}). Then, for $p \nmid n$,
\begin{equation*}
N_q(D_{\lambda}) 
= \frac{q^{n-1}-1}{q-1}
+(-1)^n \sum_{[w_{_0}] \in W/\sim_1} 
C(w_{_0}) \;
{_{l}G_{l}}
\biggl[ \begin{array}{c} A_{w_{_0}} \\
 B_{w_{_0}} \end{array}
\Big| \; \lambda^n \; \biggr]_q.
\end{equation*}
\end{theorem}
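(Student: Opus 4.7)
The plan is to derive Theorem \ref{thm_Dwork} from Corollary \ref{cor_thm2_Dwork}, which already packages the relevant Gauss-sum contributions into a single sum indexed by $s \in \{0,1,\dots,q-2\}$ and by classes $[w_{_0}] \in W/\sim_1$. The conversion to the $p$-adic setting then proceeds in three stages: apply the Gross--Koblitz formula to every Gauss sum; apply the $\Gamma_p$ distribution (multiplication) formula to the single Gauss sum $g(T^{-ns})$; and finally cancel matched pairs of top/bottom parameters via the reflection identity $\Gamma_p(x)\Gamma_p(1-x) = \pm 1$.

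Concretely, fix $T = \bar\omega$ and a representative $w_{_0}$ of the class with some coordinate zero. For each $k \in S_{w_{_0}}^c$ the summand in Corollary \ref{cor_thm2_Dwork} contains $(n_k-1)$ copies of $g(\bar\omega^{k(q-1)/t + s})$ in the numerator and one copy of $g(\bar\omega^{-k(q-1)/t - s})$ in the denominator. After Gross--Koblitz, the denominator factor can be flipped to the numerator by reflection, yielding $n_k$ copies of $\Gamma_p(\langle (\tfrac{k}{t} + \tfrac{s}{q-1}) p^a \rangle)$ for each $a$. These match the $\Gamma_p$ contributions of $n_k$ bottom parameters equal to $\tfrac{t-k}{t}$, since $\langle (-\tfrac{t-k}{t} + \tfrac{s}{q-1}) p^a \rangle = \langle (\tfrac{k}{t} + \tfrac{s}{q-1}) p^a \rangle$. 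The factor $g(\bar\omega^{-ns})$ is handled by the distribution formula, which replaces $\prod_a \Gamma_p(\langle -\tfrac{ns}{q-1} p^a \rangle)$ by $\prod_a \prod_{b=0}^{n-1}\Gamma_p(\langle (\tfrac{b}{n} - \tfrac{s}{q-1}) p^a \rangle)$ up to an $s$-independent factor; the resulting terms are precisely the $\Gamma_p$ contributions of $n$ top parameters $\tfrac{b}{n}$ with $0 \le b < n$.

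At this stage the summand has the shape of a ${_nG_n}$ with top list $\{\tfrac{b}{n} : 0 \le b < n\}$ and bottom list $\{\tfrac{t-k}{t} \text{ with multiplicity } n_k : k \in S_{w_{_0}}^c\}$, evaluated at $\lambda^n$. For each $k \in S_{w_{_0}}^c$ the top list contains a copy of $\tfrac{t-k}{t}$ (via $b = \tfrac{(t-k)n}{t} \bmod n$), so exactly one matched top/bottom pair cancels by reflection, leaving the top list $A_{w_{_0}}$ and bottom list $B_{w_{_0}}$ from (\ref{def_Aw}) and (\ref{def_Bw}). The remaining $s$-independent factors (Gross--Koblitz $\pi$-powers, distribution-formula constants, and the normalizing $\Gamma_p(\langle a_i p^a\rangle),\Gamma_p(\langle -b_j p^a\rangle)$ denominators in the definition of ${_lG_l}$) should collect to produce $C(w_{_0})$, while the character value $T^{ns}(-n\lambda)$ becomes $\bar\omega^s(\lambda^n)$ up to signs absorbed into the $(-1)^{sl}$ factor of the definition.

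The main obstacle is the careful bookkeeping of signs and powers of $p$: the reflection identity's sign depends on the first $p$-adic digit of its argument, the distribution formula (valid here because $p \nmid n$) contributes a definite power of $p$, and the Gross--Koblitz $\pi$-powers must assemble precisely into the $(-p)^{\lfloor \cdots \rfloor}$ exponent in the definition of ${_mG_m}$. Independence of the chosen representative $w_{_0}$ (consistent with the remark that ${_mG_m}$ depends only on the fractional parts of its parameters) and the special case $k = 0 \in S_{w_{_0}}^c$, where a trivial character with $g(\varepsilon) = -1$ appears at $s = 0$, should each be checked; these edge contributions ultimately combine into the overall factor $(-1)^n$ in front of the sum.
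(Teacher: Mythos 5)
Your overall route is the paper's route: start from Corollary \ref{cor_thm2_Dwork}, set $T=\bar{\omega}$, apply Gross--Koblitz to every Gauss sum, expand $g(\bar{\omega}^{-ns})$ via the multiplication formula (Theorem \ref{thm_GrossKoblitzMult}) into the $n$ parameters $\tfrac{b}{n}$, match the surviving $\Gamma_p$-factors with the lists $A_{w_0}$, $B_{w_0}$, and collect the $s$-independent residue into $C(w_0)$. The one structural difference is how you produce the cancellation that turns the full top list $\{\tfrac{b}{n}\}$ and the $n_k$-fold bottom multiplicities into $A_{w_0}$ and $B_{w_0}$. The paper does this with an \emph{exact} cancellation, valid for every $s$: the denominator factor $\Gamma_p(\langle(-\tfrac{k}{t}-\tfrac{s}{q-1})p^a\rangle)$ coming from $g(T^{-k\frac{q-1}{t}-s})$ is literally identical to the multiplication-formula numerator factor indexed by $t-k$, so they cancel as functions of $s$ with no exceptional values. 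Your double use of reflection (flip the denominator Gauss sum up, then cancel a matched top/bottom pair) reaches the same place, but each reflection degenerates at the single $s$ with $s\equiv -k\tfrac{q-1}{t}\pmod{q-1}$ (where the relevant character is trivial and $g(\varepsilon)g(\varepsilon)=1$ rather than $\chi(-1)q$); the two degeneracies occur at the same $s$ and their error factors $q$ and $q^{-1}$ cancel, but this must be said explicitly --- your proposal only flags the case $k=0$, $s=0$, and for each $k\in S_{w_0}^c$ there is such an exceptional $s$. The paper's organization avoids this bookkeeping entirely, which is what it buys you.

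Two stated details are wrong and would derail the computation if taken literally. First, the multiplication formula does \emph{not} contribute an $s$-independent factor: it contributes $\bar{\omega}^{s}(n^{-n})$, and this is exactly what cancels the $n^{n}$ inside $T^{ns}(-n\lambda)=\bar{\omega}^{s}\bigl((-1)^n n^{n}\lambda^{n}\bigr)$ to leave the argument $\lambda^{n}$; without it you would land on argument $n^{n}\lambda^{n}$. Second, the prefactor $(-1)^n$ does not come from the trivial-character edge contributions: it comes from the sign $(-1)^{n+1}$ produced by Gross--Koblitz applied to the $n+1$ Gauss sums, combined with the normalization $\tfrac{-1}{q-1}$ in Definition \ref{def_Gq}; the remaining $s$-dependent signs $\bar{\omega}^{s}(-1)^{n}$ and $\bar{\omega}^{s}(-1)^{|S_{w_0}^c|}$ (the latter from the $\prod_{k\in S_{w_0}^c} T^{k\frac{q-1}{t}+s}(-1)q$ factor) are what assemble into the $(-1)^{sl}$ of the definition, since $l=n-|S_{w_0}^c|$.
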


\noindent Theorem \ref{thm_Dwork} generalizes Theorem 2.2 in \cite{McC12} which holds for $q=p$.
Finally, if we let $\gcd(d, q-1)=1$ in Theorem \ref{thm_Dwork}, or we let $h_1=h_2=\dots h_n=1$ in Corollary \ref{cor_Main2}, it easy to see that we arrive at the following result.
\begin{cor}\label{cor_DworkMain2}
If $\gcd(d, q-1)=1$ then, for $p \nmid n$,
\begin{equation*}
N_q(D_{\lambda}) 
= \frac{q^{n-1}-1}{q-1}
+(-1)^n
{_{n-1}G_{n-1}}\biggl[ \begin{array}{cccc} \frac{1}{n} & \frac{2}{n} & \dotsc & \frac{n-1}{n} \\[4pt]
1 & 1 & \dotsc & 1 \end{array}
\Big| \; \lambda^n  \; \biggr]_q.
\end{equation*}
\end{cor}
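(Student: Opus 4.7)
The plan is to derive this corollary as an immediate specialization of results already established. Two routes are available: specializing Corollary \ref{cor_Main2} by setting $h_1=h_2=\dots=h_n=1$ (so that $d=n$ and the hypersurface reduces to the Dwork family $D_\lambda$), or specializing Theorem \ref{thm_Dwork} by imposing $\gcd(n,q-1)=1$, i.e.\ $t=1$. I would lead with the first route, which is the more direct of the two.

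With $h_i=1$ throughout, the argument $\lambda^d h_1^{h_1}\cdots h_n^{h_n}$ of the ${_{d-1}G_{d-1}}$ appearing in Corollary \ref{cor_Main2} collapses to $\lambda^n$, and the top-line parameters $\tfrac{1}{d},\tfrac{2}{d},\dots,\tfrac{d-1}{d}$ become $\tfrac{1}{n},\tfrac{2}{n},\dots,\tfrac{n-1}{n}$. The bottom-line list $[\tfrac{b_i}{h_i} \mid i=1,\dots,n;\ b_i=0,1,\dots,h_i-1]$ reduces to $n$ copies of $0$, leaving $n-1$ zeros after one zero is removed. The only substantive point is that each bottom parameter $b_i$ enters Definition \ref{def_Gq} only through $\langle -b_i p^k\rangle$ and $\lfloor \langle -b_i p^k\rangle + \tfrac{sp^k}{q-1}\rfloor$, both of which depend on $b_i$ only modulo $\mathbb{Z}$. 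Since $\langle 1\rangle = \langle 0\rangle = 0$, the $n-1$ zeros may be rewritten as $n-1$ ones without changing the value of ${_{n-1}G_{n-1}}$, yielding exactly the stated formula.

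For completeness, one may cross-check via Theorem \ref{thm_Dwork}. When $t=1$, the set $W$ collapses to the single vector $(0,0,\dots,0)$, so $W/\sim_1$ contains a single class whose representative $w_{_0}$ has $w_i/t = 0$ for every $i$. One computes $C(w_{_0}) = \Gamma_p(0)^{nr} = 1$ using the convention $\Gamma_p(0)=1$; the set $S_{w_{_0}}^c=\{0\}$ with $n_0=n$ gives $B_{w_{_0}}=[1,1,\dots,1]$ of length $n-1$; and the condition $b\not\equiv 0\imod{n}$ in the definition of $A_{w_{_0}}$ excludes only $b=0$, producing $A_{w_{_0}} = [\tfrac{1}{n},\tfrac{2}{n},\dots,\tfrac{n-1}{n}]$. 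The resulting expression agrees with the first derivation. The proof is thus entirely a matter of specialization, with no genuine obstacle beyond the minor bookkeeping that identifies $0$ and $1$ as bottom parameters of ${_{m}G_{m}}$.
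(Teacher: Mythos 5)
Your proposal is correct and follows exactly the route the paper itself indicates: the paper derives Corollary \ref{cor_DworkMain2} by specializing either Corollary \ref{cor_Main2} to $h_1=\dots=h_n=1$ or Theorem \ref{thm_Dwork} to $t=1$, and you carry out both, correctly handling the only nontrivial bookkeeping (that the bottom parameters $0$ and $1$ give the same value of ${_{m}G_{m}}$ since the function depends only on fractional parts, and that $C(w_{_0})=1$, $A_{w_{_0}}=[\tfrac{1}{n},\dots,\tfrac{n-1}{n}]$, $B_{w_{_0}}=[1,\dots,1]$ when $t=1$). No gaps.
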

\noindent Corollary \ref{cor_DworkMain2} generalizes Corollary 2.3 in \cite{McC12} which holds for $q=p$.


\section{Preliminaries}\label{sec_Prelim}
We start by recalling some properties of Gauss and Jacobi sums. See \cite{BEW, IR} for further details, noting that we have adjusted results to take into account $\varepsilon(0)=0$, where $\varepsilon$ is the trivial character. We first note that $G(\varepsilon)=-1$. For $\chi \in \widehat{\mathbb{F}^{*}_{q}}$,
\begin{equation}\label{for_GaussConj}
G(\chi)G(\bar{\chi})=
\begin{cases}
\chi(-1) q & \chi \neq \varepsilon,\\
1 & \chi= \varepsilon.
\end{cases}
\end{equation}
For $\chi_1, \chi_2, \dotsc, \chi_k \in \widehat{\mathbb{F}^{*}_{q}}$ and $\alpha \in \mathbb{F}_q$, we define the generalized Jacobi sum
$$J_{\alpha}(\chi_1, \chi_2, \dotsc, \chi_k):= \sum_{t_i \in \mathbb{F}_q, t_1+t_2+ \dotsm + t_k=\alpha} \chi_1(t_1) \chi_2(t_2) \dotsm \chi_k(t_k).$$
When $\alpha=1$ we recover the usual Jacobi sum as defined in Section \ref{sec_Results}.
\begin{prop}\label{prop_Jac0}
For $\chi_1, \chi_2, \dotsc, \chi_k \in \widehat{\mathbb{F}^{*}_{q}}$,
\begin{equation*}
J_{0}(\chi_1, \chi_2, \dotsc, \chi_k)=
\begin{cases}
(q-1)^k - (q-1) \: J(\chi_1, \chi_2, \dotsc, \chi_k) & \quad \textup{if } \chi_1, \chi_2, \dotsc, \chi_k \text{ all trivial,}\\[9pt] 
-(q-1) J(\chi_1, \chi_2, \dotsc, \chi_k) & \quad \textup{if } \chi_1\chi_2\dotsm\chi_k \text{ trivial but at least}\\
&\quad\text{one of } \chi_1, \chi_2, \dotsc, \chi_k \text{ non-trivial,}\\[9pt]
0 &\quad \textup{if } \chi_1\chi_2\dotsm\chi_k \text{ non-trivial.}
\end{cases}
\end{equation*}
\end{prop}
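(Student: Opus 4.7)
The plan is to determine $J_0$ by computing two auxiliary sums, namely $\sum_{\alpha \in \mathbb{F}_q} J_\alpha$ and $\sum_{\alpha \in \mathbb{F}_q^*} J_\alpha$, and then reading off $J_0$ as the difference. Both sums admit clean closed forms via standard character-sum manipulations, and the three cases of the proposition will fall out according to the triviality pattern of the $\chi_i$.

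First, I would establish a scaling identity. For $\alpha \in \mathbb{F}_q^*$, the substitution $t_i \mapsto \alpha t_i$ in the defining sum for $J_\alpha$ transforms the constraint $t_1 + \dotsm + t_k = \alpha$ into $t_1 + \dotsm + t_k = 1$, while each $\chi_i(\alpha t_i)$ factors as $\chi_i(\alpha)\chi_i(t_i)$, yielding $J_\alpha(\chi_1, \dotsc, \chi_k) = (\chi_1 \chi_2 \dotsm \chi_k)(\alpha) \, J(\chi_1, \dotsc, \chi_k)$. Summing over $\alpha \in \mathbb{F}_q^*$ and applying character orthogonality then gives $\sum_{\alpha \ne 0} J_\alpha = (q-1) J(\chi_1, \dotsc, \chi_k)$ if $\chi_1 \dotsm \chi_k = \varepsilon$, and $0$ otherwise.

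Next, I would compute $\sum_{\alpha \in \mathbb{F}_q} J_\alpha$ by swapping the order of summation and dropping the constraint on $\alpha$, which factors the expression into $\prod_i \sum_{t_i \in \mathbb{F}_q} \chi_i(t_i)$. Using the convention $\varepsilon(0) = 0$, each inner sum equals $q-1$ when $\chi_i = \varepsilon$ and $0$ otherwise, so the total is $(q-1)^k$ if every $\chi_i$ is trivial and $0$ as soon as some $\chi_i$ is non-trivial.

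Finally, writing $J_0 = \sum_{\alpha \in \mathbb{F}_q} J_\alpha - \sum_{\alpha \in \mathbb{F}_q^*} J_\alpha$ and splitting according to the three triviality patterns reproduces the three cases of the proposition. There is no real obstacle: this is a routine character-sum computation. The only point that requires minor care is the convention $\varepsilon(0) = 0$, which is what makes $\sum_{t \in \mathbb{F}_q} \varepsilon(t) = q-1$ rather than $q$, and is precisely what generates the $(q-1)^k$ term in the all-trivial case.
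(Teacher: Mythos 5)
Your argument is correct, and all three cases check out (e.g.\ for $k=2$ all trivial it gives $J_0=(q-1)^2-(q-1)(q-2)=q-1$, which matches a direct count). The paper itself offers no proof of this proposition --- it is recalled from the standard references \cite{BEW, IR} with the convention $\varepsilon(0)=0$ noted --- so there is nothing to compare against; your decomposition $J_0=\sum_{\alpha\in\mathbb{F}_q}J_\alpha-\sum_{\alpha\in\mathbb{F}_q^*}J_\alpha$, with the scaling identity $J_\alpha=(\chi_1\dotsm\chi_k)(\alpha)\,J$ for $\alpha\neq0$ and the factorization of the unconstrained sum, is exactly the standard derivation, and you correctly isolate the role of $\varepsilon(0)=0$ in producing the $(q-1)^k$ term.
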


\begin{prop}\label{prop_JacRedSmall}
For $\chi_1\chi_2\dotsm\chi_k$ trivial but at least one of $\chi_1, \chi_2, \dotsc, \chi_k$ non-trivial then
\begin{equation*}
 J(\chi_1, \chi_2, \dotsc, \chi_k)=-\chi_k(-1)  J(\chi_1, \chi_2, \dotsc, \chi_{k-1}) \; .
 \end{equation*}
\end{prop}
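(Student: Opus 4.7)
The plan is to reduce the identity to the standard Gauss-sum identity $g(\chi)g(\bar\chi) = \chi(-1)q$ for $\chi \neq \varepsilon$. First, I would use orthogonality of a non-trivial additive character $\theta$ to rewrite the constraint $t_1+\dots+t_k=1$, and then substitute $u_i = yt_i$ in each inner sum to separate the variables. This produces
\[
J(\chi_1,\dots,\chi_k) = \frac{g(\chi_1)\cdots g(\chi_k)}{q}\sum_{y\neq 0}\overline{\chi_1\chi_2\cdots\chi_k}(y)\,\theta(-y),
\]
where the $y=0$ contribution vanishes because at least one $\chi_i$ is non-trivial. Under the hypothesis $\chi_1\chi_2\cdots\chi_k=\varepsilon$, the remaining sum equals $\sum_{y\neq 0}\theta(-y)=-1$, giving the key identity
\[
J(\chi_1,\dots,\chi_k) = -\frac{g(\chi_1)g(\chi_2)\cdots g(\chi_k)}{q}. \qquad (\dagger)
\]

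Next, I would split on whether $\chi_k$ is trivial. If $\chi_k\neq\varepsilon$, then $\chi_1\cdots\chi_{k-1}=\bar\chi_k$ is non-trivial, and the same orthogonality calculation applied to $J(\chi_1,\dots,\chi_{k-1})$ produces the classical formula $J(\chi_1,\dots,\chi_{k-1}) = g(\chi_1)\cdots g(\chi_{k-1})/g(\bar\chi_k)$. Substituting $g(\chi_k) = \chi_k(-1)q/g(\bar\chi_k)$ into $(\dagger)$ and cancelling immediately yields the desired identity. If instead $\chi_k=\varepsilon$, then $\chi_k(-1)=1$, $g(\chi_k) = -1$, and $\chi_1\cdots\chi_{k-1} = \varepsilon$ with at least one factor non-trivial, so $(\dagger)$ applies at level $k-1$ as well, and the identity follows at once from the factor of $-1$ contributed by $g(\varepsilon)$.

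The main obstacle is the degenerate case $\chi_k=\varepsilon$, in which the standard Gauss--Jacobi product formula $J = \prod g(\chi_i)/g(\prod\chi_i)$ is unavailable at level $k-1$ and one must iterate $(\dagger)$ instead. The only other delicate point is careful bookkeeping of the conventions $\chi(0) = 0$ and $g(\varepsilon) = -1$, which are fixed in Section \ref{sec_Results}. Beyond these, the computation is elementary.
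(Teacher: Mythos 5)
Your proposal is correct, and it follows the standard route: the paper itself states this proposition without proof (as one of several classical facts cited from Berndt--Evans--Williams and Ireland--Rosen, adjusted for the convention $\varepsilon(0)=0$), and your argument simply reassembles it from the Gauss-sum factorization of the Jacobi sum (your $(\dagger)$, which is the second case of Proposition \ref{prop_JactoGauss}) together with $g(\chi)g(\bar{\chi})=\chi(-1)q$ from (\ref{for_GaussConj}). The one point needing care, the degenerate case $\chi_k=\varepsilon$ where $g(\varepsilon)=-1$ and $(\dagger)$ must be iterated at level $k-1$, is handled correctly.
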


\begin{prop}\label{prop_JacRedAllTrivial}
For $\chi_1, \chi_2, \dotsc, \chi_k$ all trivial,
\begin{equation*}
J(\chi_1, \chi_2, \dotsc, \chi_k) =[(q-1)^k+(-1)^{k+1}]/q
\end{equation*}
\end{prop}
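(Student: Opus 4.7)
The plan is to observe that, under the paper's convention $\varepsilon(0)=0$, the Jacobi sum $J(\varepsilon,\varepsilon,\dots,\varepsilon)$ reduces to a pure count,
\[
N_k := \#\{(t_1,\dots,t_k) \in (\mathbb{F}_q^*)^k : t_1+\dots+t_k=1\},
\]
since each factor $\varepsilon(t_i)$ equals $1$ precisely when $t_i \neq 0$ and vanishes otherwise. The claim therefore becomes a routine combinatorial identity for $N_k$.

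I would evaluate $N_k$ by inclusion--exclusion on the subsets of coordinates that are forced to be zero. The number of $k$-tuples in $\mathbb{F}_q^k$ with prescribed sum $1$ is $q^{k-1}$; if one singles out any $j < k$ coordinates and sets them to zero, the remaining $k-j$ coordinates range freely over $\mathbb{F}_q$ subject to summing to $1$, contributing $q^{k-j-1}$ tuples. The $j=k$ term vanishes because $0=1$ has no solutions. Summing with the usual alternating signs gives
\[
N_k = \sum_{j=0}^{k-1} (-1)^j \binom{k}{j} q^{k-j-1} = \frac{1}{q}\Biggl[\sum_{j=0}^{k} (-1)^j \binom{k}{j} q^{k-j} - (-1)^k\Biggr] = \frac{(q-1)^k - (-1)^k}{q},
\]
and rewriting $-(-1)^k$ as $(-1)^{k+1}$ produces the stated expression.

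An equivalent derivation would use orthogonality of a fixed nontrivial additive character $\theta$: one writes
\[
N_k = \frac{1}{q}\sum_{a \in \mathbb{F}_q} \theta(-a)\biggl(\sum_{t \in \mathbb{F}_q^*} \theta(at)\biggr)^k,
\]
notes that the $a=0$ term contributes $(q-1)^k$, and then uses $\sum_{t \in \mathbb{F}_q^*} \theta(at) = -1$ for $a \neq 0$ together with $\sum_{a \neq 0} \theta(-a) = -1$ to collect $(-1)\cdot(-1)^k = (-1)^{k+1}$ from the remaining terms. There is essentially no obstacle here; the only points to watch are the boundary term $j=k$ (respectively $a=0$) and the sign bookkeeping, both of which can be confirmed by a sanity check against $k=1$ (where $N_1=1$) and $k=2$ (where $N_2 = q-2$).
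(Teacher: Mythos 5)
Your proof is correct. The paper does not actually prove this proposition; it is stated in the preliminaries as a standard fact, citing \cite{BEW, IR} with the remark that the formulas have been adjusted for the convention $\varepsilon(0)=0$. Your reduction of $J(\varepsilon,\dotsc,\varepsilon)$ to the count $N_k$ of tuples in $(\mathbb{F}_q^*)^k$ summing to $1$ is exactly the right reading of that convention, and both of your evaluations of $N_k$ --- the inclusion--exclusion over coordinates forced to zero (with the correct vanishing of the $j=k$ boundary term) and the additive-character orthogonality argument --- are standard, complete, and consistent with the sanity checks $N_1=1$ and $N_2=q-2$.
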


\begin{prop}\label{prop_JactoGauss}
For $\chi_1, \chi_2, \dotsc, \chi_k$ not all trivial,
\begin{equation*}
J(\chi_1, \chi_2, \dotsc, \chi_k)=
\begin{cases}
\dfrac{G(\chi_1)G(\chi_2)\dotsc G(\chi_k)}{G(\chi_1 \chi_2 \dotsm \chi_k)}
& \qquad \chi_1 \chi_2 \dotsm \chi_k \neq \varepsilon,\\[18pt]
-\dfrac{G(\chi_1)G(\chi_2)\dotsc G(\chi_k)}{q}
&\qquad \chi_1 \chi_2 \dotsm \chi_k = \varepsilon .
\end{cases}
\end{equation*}
\end{prop}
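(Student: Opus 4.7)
The plan is to derive the identity by expanding the product $G(\chi_1)\cdots G(\chi_k)$ and regrouping by the value of the sum of arguments. Write
\begin{equation*}
\prod_{i=1}^{k} G(\chi_i) = \sum_{x_1, \dots, x_k \in \mathbb{F}_q} \chi_1(x_1) \cdots \chi_k(x_k) \, \theta(x_1 + \cdots + x_k) = \sum_{u \in \mathbb{F}_q} \theta(u) \, J_u(\chi_1, \dots, \chi_k),
\end{equation*}
using the definition of the generalized Jacobi sum recalled just before Proposition \ref{prop_Jac0}. This splits the computation cleanly into a $u=0$ term, which is exactly $J_0(\chi_1, \dots, \chi_k)$, and a sum over $u \in \mathbb{F}_q^{*}$.

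For the $u \neq 0$ terms, I would perform the change of variables $x_i = u t_i$. Since $\chi_i(0)=0$, the terms with $x_i = 0$ contribute nothing and the map is a bijection of $\mathbb{F}_q$ onto itself, giving
\begin{equation*}
J_u(\chi_1, \dots, \chi_k) = (\chi_1 \chi_2 \cdots \chi_k)(u) \, J(\chi_1, \dots, \chi_k).
\end{equation*}
Therefore
\begin{equation*}
\prod_{i=1}^{k} G(\chi_i) = J_0(\chi_1, \dots, \chi_k) + J(\chi_1, \dots, \chi_k) \sum_{u \in \mathbb{F}_q^{*}} (\chi_1 \cdots \chi_k)(u) \, \theta(u).
\end{equation*}

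Now I would split into two cases using Proposition \ref{prop_Jac0}. If $\chi_1 \chi_2 \cdots \chi_k \neq \varepsilon$, then $J_0 = 0$ and the remaining sum over $u \in \mathbb{F}_q^{*}$ equals $G(\chi_1 \chi_2 \cdots \chi_k)$ (adding the $u=0$ term contributes nothing since $(\chi_1\cdots\chi_k)(0)=0$). Solving for $J$ yields the first case of the proposition. If instead $\chi_1 \chi_2 \cdots \chi_k = \varepsilon$ but at least one $\chi_i$ is non-trivial, then $J_0 = -(q-1)J$ by Proposition \ref{prop_Jac0}, and $\sum_{u \in \mathbb{F}_q^{*}} \theta(u) = -1$ since $\theta$ is a non-trivial additive character. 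Substituting gives $\prod G(\chi_i) = -(q-1)J - J = -qJ$, which rearranges to the second case.

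There is no genuine obstacle here; the argument is a standard orthogonality-and-substitution computation. The only subtlety worth flagging is that the change of variables $x_i = u t_i$ must be justified as a bijection of $\mathbb{F}_q$ (including $x_i = 0 \leftrightarrow t_i = 0$), which works precisely because our convention $\chi_i(0) = 0$ makes the boundary terms harmless.
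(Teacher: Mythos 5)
Your argument is correct: the decomposition $\prod_i G(\chi_i)=\sum_u \theta(u)J_u$, the substitution $x_i=ut_i$ (valid at $t_i=0$ precisely because of the convention $\chi(0)=0$), and the two-case evaluation via Proposition \ref{prop_Jac0} together give exactly the stated identity, with the division by $G(\chi_1\cdots\chi_k)$ in the first case licensed by (\ref{for_GaussConj}). The paper does not prove this proposition at all --- it is recalled as a standard fact with a pointer to \cite{BEW, IR} --- and your proof is precisely the classical argument found in those references, so there is nothing to reconcile.
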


We now recall the $p$-adic gamma function. For further details, see \cite{Ko}.
Let $p$ be an odd prime.  For $n \in \mathbb{Z}^{+}$ we define the $p$-adic gamma function as
\begin{align*}
\gfp{n} &:= {(-1)}^n \prod_{\substack{0<j<n\\p \nmid j}} j 
\end{align*}
and extend it to all $x \in\mathbb{Z}_p$ by setting $\gfp{0}:=1$ and
$\gfp{x} := \lim_{n \rightarrow x} \gfp{n}$
for $x\neq 0$, where $n$ runs through any sequence of positive integers $p$-adically approaching $x$. 
This limit exists, is independent of how $n$ approaches $x$, and determines a continuous function
on $\mathbb{Z}_p$ with values in $\mathbb{Z}^{*}_p$.
The function satisfies the following product formula.
\begin{theorem}[Gross, Koblitz \cite{GK} Thm.~3.1]\label{thm_GrossKoblitzMult}
If $h\in\mathbb{Z}^{+}$, $p \nmid h$ and $0\leq x <1$ with $(q-1)x \in \mathbb{Z}$, then
\begin{equation}\label{for_pGammaMult}
\prod_{a=0}^{r-1} \prod_{b=0}^{h-1} \gfp{\langle \tfrac{x+b}{h} p^a \rangle} 
= \omega \big(h^{(q-1)x}\bigr)
\prod_{a=0}^{r-1} \gfp{\langle x p^a \rangle} \prod_{b=1}^{h-1} \gfp{\langle \tfrac{b}{h} p^a \rangle}.
\end{equation}
\end{theorem}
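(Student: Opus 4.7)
The plan is to deduce (\ref{for_pGammaMult}) from the Hasse--Davenport product relation for Gauss sums together with the original Gross--Koblitz formula expressing a Gauss sum in terms of the $p$-adic gamma function:
\begin{equation*}
g(\bar\omega^m) = -\pi^{(p-1)\sigma(m)} \prod_{a=0}^{r-1} \Gamma_p\langle mp^a/(q-1) \rangle,
\end{equation*}
where $\pi$ is a fixed root of $X^{p-1}+p=0$ and $\sigma(m)=\sum_{a=0}^{r-1}\langle mp^a/(q-1)\rangle$. Any multiplicative identity among Gauss sums then translates, after matching $\pi$-valuations (which is automatic via the standard digit-sum identity for $\langle\cdot\rangle$), into an identity among products of $\Gamma_p$-values.

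First I would handle the case $h \mid q-1$. Take $\eta = \bar\omega^{(q-1)/h}$, a character of exact order $h$, and $\chi = \bar\omega^m$ with $m = (q-1)x$. The Hasse--Davenport product relation gives
\begin{equation*}
\prod_{b=0}^{h-1} g(\chi \eta^b) = -\chi(h)^{-h}\, g(\chi^h) \prod_{b=1}^{h-1} g(\eta^b).
\end{equation*}
Applying Gross--Koblitz termwise and reindexing, the left side becomes the LHS of (\ref{for_pGammaMult}) (using the identity $\langle \tfrac{x+b}{h}p^a\rangle = \langle (m+b(q-1))p^a / (h(q-1))\rangle$), the factor $g(\chi^h)$ yields $\prod_a \Gamma_p\langle xp^a\rangle$, the tail $\prod_{b\geq 1} g(\eta^b)$ yields $\prod_a\prod_{b\geq 1}\Gamma_p\langle (b/h)p^a\rangle$, and the Teichm\"uller prefactor $\chi(h)^{-h}$ collapses, using $\omega^{q-1}=1$, to $\omega(h^{(q-1)x})$.

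For general $h$ coprime to $p$, pass to an extension $\mathbb{F}_{q^s}$ with $h \mid q^s-1$, apply the case just treated there, and descend to $\mathbb{F}_q$ by comparing the $a$-products over the two periods: the exponents appearing in (\ref{for_pGammaMult}) for $\mathbb{F}_q$ are periodic of period $r$, so grouping the $\mathbb{F}_{q^s}$ identity into $s$ copies of the $\mathbb{F}_q$ identity and cancelling yields the desired statement. The main obstacle is the bookkeeping: keeping the signs in Hasse--Davenport straight, matching the normalization of $\pi$ across the extension, and verifying that the Teichm\"uller factor carries the exponent $(q-1)x$ rather than a Frobenius-twisted version such as $(q^s-1)x$. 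As an alternative route, since $x$ ranges over the finite set $\{m/(q-1):0\le m\le q-2\}$, one could argue by induction on $m$ using the functional equation $\Gamma_p(y+1) = -y^*\,\Gamma_p(y)$ (with $y^*$ the unit part of $y$), but the notational overhead is substantially heavier and the Hasse--Davenport approach is more conceptual.
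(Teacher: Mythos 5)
The paper does not prove this statement: it is quoted verbatim from Gross--Koblitz \cite{GK} (their Theorem 3.1), the only added content being the remark that $\omega$ may be taken to be the Teichm\"uller character of $\mathbb{F}_q^{*}$ rather than of $\mathbb{F}_p^{*}$, since $h^{(q-1)x}$ reduces into $\mathbb{F}_p^{*}$. So there is no in-paper argument to compare against, and your proposal has to stand on its own. The strategy (Hasse--Davenport product relation plus Theorem \ref{thm_GrossKoblitz}) is a legitimate route to multiplication formulas for $\Gamma_p$, but your base case already has a genuine gap. With $\eta=\bar{\omega}^{(q-1)/h}$ and $\chi=\bar{\omega}^{m}$, $m=(q-1)x$, the Gauss sums $g(\chi\eta^b)$ correspond under Gross--Koblitz to the fractions $x+\tfrac{b}{h}$, \emph{not} $\tfrac{x+b}{h}$. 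Your own rewriting $\langle\tfrac{x+b}{h}p^a\rangle=\langle\tfrac{(m+b(q-1))p^a}{h(q-1)}\rangle$ shows the relevant denominators are $h(q-1)$, so these arguments have the form $\tfrac{j}{q-1}$ only when $h\mid m$: for $q=7$, $h=3$, $x=\tfrac16$ the left side involves $\Gamma_p$ at fractional parts of multiples of $\tfrac1{18},\tfrac7{18},\tfrac{13}{18}$, which no character of $\mathbb{F}_7^{*}$ can produce. The Hasse--Davenport argument must therefore be run from the outset over $\mathbb{F}_{q^s}$ with $h(q-1)\mid q^s-1$, taking $\chi$ there to be an $h$-th root of the lift of $\bar{\omega}^m$; the case $h\mid q-1$ buys you essentially nothing.

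The descent is also not the routine cancellation you describe. The individual exponents $\langle\tfrac{x+b}{h}p^a\rangle$ are \emph{not} periodic in $a$ with period $r$; only the product over $b$ is, because multiplication by $q=p^r$ permutes the set $\{\tfrac{x+b}{h}\bmod\mathbb{Z}\}_{b}$ (via $b\mapsto m+bq \bmod h$) --- a fact your argument needs but does not establish. Granting it, grouping the $\mathbb{F}_{q^s}$ identity yields the $s$-th \emph{power} of the desired identity, $A^s=B^s$, and concluding $A=B$ requires ruling out a spurious $s$-th root of unity. Since every admissible $s$ is a multiple of the order of $q$ modulo $h(q-1)$, you cannot do this by varying $s$ over coprime values; some further input (a congruence modulo the maximal ideal, or continuity of $\Gamma_p$ together with a density argument in $x$) is required. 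Finally, your displayed Hasse--Davenport relation carries an extra minus sign in this paper's normalization: with $g(\varepsilon)=-1$, setting $\chi=\varepsilon$ in your formula gives $-P=+P$ for $P=\prod_{b=1}^{h-1}g(\eta^b)\neq 0$. None of this is fatal to the strategy, but each item is a real gap; the cleanest repair may in fact be the direct route you set aside --- prove the distribution relation for $\Gamma_p$ at positive integers from the definition, extend by continuity, and then take the product over $a$.
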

\noindent We note that in the original statement of Theorem \ref{thm_GrossKoblitzMult} in \cite{GK}, $\omega$ is the Teichm\"{u}ller character of $\mathbb{F}_p^{*}$. However, the result above still holds as $\omega|_{\mathbb{F}_p^{*}}$ is the Teichm\"{u}ller character of $\mathbb{F}_p^{*}$.

The Gross-Koblitz formula allows us to relate Gauss sums and the $p$-adic gamma function. Let $\pi \in \mathbb{C}_p$ be the fixed root of $x^{p-1}+p=0$ that satisfies ${\pi \equiv \zeta_p-1 \pmod{{(\zeta_p-1)}^2}}$.
\begin{theorem}[Gross, Koblitz \cite{GK} Thm.~1.7]\label{thm_GrossKoblitz}
For $ j \in \mathbb{Z}$,
\begin{equation*} 
g(\bar{\omega}^j)=-\pi^{(p-1) \sum_{a=0}^{r-1} \langle{\frac{jp^a}{q-1}}\rangle} \: \prod_{a=0}^{r-1} \gfp{\langle{\tfrac{jp^a}{q-1}}\rangle}.
\end{equation*}
\end{theorem}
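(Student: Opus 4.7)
The plan is to follow Dwork's classical approach as adapted by Gross and Koblitz: realize the additive character $\theta$ explicitly via a $p$-adic splitting series evaluated at Teichm\"{u}ller lifts, substitute into the definition of $g(\bar\omega^j)$, and identify the resulting $p$-adic expression as a product of $\Gamma_p$ values. I would handle the base case $q = p$ first and then extend to general $q = p^r$ via a Davenport--Hasse-type factorization of $g(\bar\omega^j)$ over $\mathbb{F}_q$ into Gauss sums indexed by the Frobenius conjugates $\bar\omega^{jp^a}$.

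For $q = p$, I would invoke Dwork's splitting function: a series $F(X) \in 1 + X\mathbb{Z}_p[\pi][[X]]$ (for instance an Artin--Hasse-type exponential $E(\pi X)$ suitably normalized) which converges on a disc of radius strictly greater than $1$ in $\mathbb{C}_p$ and satisfies the key identity $F(\hat{x}) = \theta(x)$ for the Teichm\"{u}ller lift $\hat{x}$ of $x \in \mathbb{F}_p^*$; the normalization $\pi \equiv \zeta_p - 1 \pmod{(\zeta_p - 1)^2}$ is exactly what forces the output to land on the prescribed primitive $p$-th root of unity. Substituting into $g(\bar\omega^j) = \sum_{x \in \mathbb{F}_p^*} \bar\omega^j(x)\,F(\hat{x})$, expanding $F$ as a power series, and interchanging summation, character orthogonality
\begin{equation*}
\sum_{x \in \mathbb{F}_p^*} \omega^{m - j}(x) = (p - 1)\,\delta_{m \equiv j \bmod (p-1)}
\end{equation*}
collapses the double sum to a one-variable $\pi$-adic series in $m$ running through the residue class of $j$ modulo $p-1$. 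Stickelberger's theorem, which independently fixes the $\pi$-adic valuation of $g(\bar\omega^j)$ at $(p-1)\langle j/(p-1)\rangle$, forces the sum to be captured by its leading term. Computing that leading coefficient explicitly via Wilson's theorem, and rewriting the resulting factorial through the recursion $\Gamma_p(x+1) = -x\,\Gamma_p(x)$ when $p \nmid x$ (resp.\ $\Gamma_p(x+1) = -\Gamma_p(x)$ when $p \mid x$), produces exactly $-\pi^{(p-1)\langle j/(p-1)\rangle}\Gamma_p(\langle j/(p-1)\rangle)$, settling the $r = 1$ case.

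For general $r$, I would invoke the Davenport--Hasse factorization expressing $g(\bar\omega^j)$ over $\mathbb{F}_q$ as a signed product of $\mathbb{F}_p$-Gauss sums governed by the base-$p$ digits of $\langle jp^a / (q-1) \rangle$ for $a = 0, \ldots, r-1$. Applying the $r = 1$ result to each factor and collecting $\pi$-exponents yields the asserted sum $(p-1)\sum_{a=0}^{r-1} \langle jp^a/(q-1) \rangle$ in the exponent together with the product $\prod_{a=0}^{r-1} \Gamma_p(\langle jp^a/(q-1)\rangle)$ on the right, with the sign $-1$ surviving the telescoping of the $(-1)^{r-1}$ from Davenport--Hasse against the $r$ sign factors from the base case. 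The main obstacle is the base case: verifying the Dwork splitting identity $F(\hat{x}) = \theta(x)$ demands delicate $p$-adic convergence estimates, and the vanishing of all non-leading terms in the Gauss sum expansion rests squarely on the sharp Stickelberger valuation. The combinatorial manipulation that turns the surviving factorial into a $\Gamma_p$ value is routine but requires careful bookkeeping across the $p \mid x$ vs.\ $p \nmid x$ dichotomy in the $\Gamma_p$ recursion.
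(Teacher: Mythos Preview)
The paper does not prove this theorem at all: it is quoted as a preliminary result from Gross--Koblitz \cite{GK}, Theorem~1.7, and is used as a black box in the proofs of Theorems~\ref{thm_Main} and~\ref{thm_Dwork}. So there is no ``paper's own proof'' to compare against.

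That said, your outline for the base case $q=p$ is essentially the standard Dwork argument and is fine in spirit (though the remark that Stickelberger ``forces the sum to be captured by its leading term'' is not quite how the computation goes: one evaluates the $p$-adic series directly, and Stickelberger's congruence is a \emph{consequence}, not an input). The real problem is your reduction for general $r$. There is no Davenport--Hasse factorization of $g(\bar\omega^j)$ over $\mathbb{F}_q$ into $\mathbb{F}_p$-Gauss sums: the Hasse--Davenport lifting relation applies only when the $\mathbb{F}_q$-character is obtained from an $\mathbb{F}_p$-character by composition with the norm, and $\bar\omega^j$ is almost never of this form (it would force $(p-1)\mid j$ after suitable normalization). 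Moreover, the ``Frobenius conjugates'' $\bar\omega^{jp^a}$ all give the \emph{same} Gauss sum over $\mathbb{F}_q$, since the additive character is Frobenius-invariant, so indexing by them does not produce a nontrivial decomposition. What actually happens in the Gross--Koblitz proof is that the \emph{additive character} on $\mathbb{F}_q$ decomposes as $\theta(x)=\prod_{a=0}^{r-1} F(\hat{x}^{p^a})$, where $F$ is the Dwork splitting series for $\mathbb{F}_p$; one then expands this product inside the single sum over $\mathbb{F}_q^*$, and orthogonality on $\mathbb{F}_q^*$ picks out the terms indexed by the base-$p$ digits of $j$. The product over $a=0,\dots,r-1$ in the statement comes from this decomposition of the character, not from a factorization of the Gauss sum itself.
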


We now recall some results of Weil \cite{W} and Koblitz \cite{Ko4}. Note that the definitions and notation used for characters and for Gauss and Jacobi sums in both those papers differ from each other, and differ from what's defined in this paper. So, we have adjusted their results accordingly. 
For $d \in \mathbb{Z}^{+}$, let $D_{d}$ denote the diagonal hypersurface
$$D_{d}: x_1^d  + x_2^d \dots + x_n^d = 0.$$

\begin{theorem}[Weil \cite{W}]\label{thm_Weil}
Let $N_q^{A}(D_{d}) $ be the number of points in $\mathbb{A}^{n}(\mathbb{F}_q)$ on 
$D_{d}$.
Let $t:=\gcd(d,q-1)$. Then
$$N_q^{A}(D_{d}) = q^{n-1}-(q-1) \sum_{w^{*}} J(T^{w_1 \frac{q-1}{t}}, T^{w_2 \frac{q-1}{t}}, \dots, T^{w_n \frac{q-1}{t}})$$
where the sum is over all $w^{*}=(w_1, w_2, \dots, w_n) \in W$ such that $0 < w_i < t$.
\end{theorem}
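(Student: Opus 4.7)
The plan is to follow the classical character-sum expansion that Weil introduced. I would begin by stratifying the affine solution set by coordinate values, writing
$$N_q^{A}(D_d) = \sum_{\substack{c_1, \ldots, c_n \in \mathbb{F}_q \\ c_1 + \cdots + c_n = 0}} \prod_{i=1}^{n} N_i(c_i),$$
where $N_i(c) = \#\{x \in \mathbb{F}_q : x^d = c\}$. Since the image of the $d$-th power map on $\mathbb{F}_q^{*}$ coincides with the subgroup of $t$-th powers (with $t = \gcd(d,q-1)$), orthogonality of characters on $\mathbb{F}_q^{*}$, together with the convention $\chi(0)=0$, gives the uniform identity
$$N_i(c) = 1 + \sum_{j=1}^{t-1} T^{j(q-1)/t}(c) \quad\text{for all } c \in \mathbb{F}_q.$$
Getting this boundary case right (the $c=0$ contribution to the root count must absorb into the constant $1$) is the one delicate bookkeeping point; after that the argument is largely mechanical.

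Next I would expand the product, indexing terms by a subset $S \subseteq \{1,\ldots,n\}$ (the coordinates where we selected a nontrivial character term) together with a choice of exponents $j_i \in \{1,\ldots,t-1\}$ for $i \in S$. Interchanging with the outer sum over the $c_i$'s, I would show that whenever $S \neq \{1,\ldots,n\}$, picking any one coordinate outside $S$ as the dependent variable in the hyperplane $c_1 + \cdots + c_n = 0$ leaves the remaining $c_i$ with $i \notin S$ free. This produces a factor $q^{n-|S|-1}$ multiplied by $\prod_{i \in S}\sum_{c_i \in \mathbb{F}_q} T^{j_i(q-1)/t}(c_i)$, which vanishes because each character is nontrivial. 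Therefore only $S = \emptyset$ and $S = \{1,\ldots,n\}$ survive.

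The contribution from $S = \emptyset$ is simply the number of solutions to $c_1 + \cdots + c_n = 0$, namely $q^{n-1}$. The contribution from $S = \{1,\ldots,n\}$ is
$$\sum_{j_1,\ldots,j_n \in \{1,\ldots,t-1\}} J_0\bigl(T^{j_1(q-1)/t}, \ldots, T^{j_n(q-1)/t}\bigr),$$
which I would evaluate using Proposition \ref{prop_Jac0}. Since every $T^{j_i(q-1)/t}$ is nontrivial, the sum collapses to those tuples for which the product character $T^{(j_1+\cdots+j_n)(q-1)/t}$ is trivial, i.e., $j_1+\cdots+j_n \equiv 0 \pmod t$; in that case $J_0 = -(q-1)\, J$. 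Renaming $j_i = w_i$ identifies the surviving tuples with $W$ restricted to $0 < w_i < t$, yielding exactly the sum $\sum_{w^{*}}$ in the statement and producing the claimed formula $N_q^{A}(D_d) = q^{n-1} - (q-1)\sum_{w^{*}} J(T^{w_1(q-1)/t},\ldots,T^{w_n(q-1)/t})$. The only real obstacle is organizing the expansion cleanly; no deep input beyond character orthogonality and Proposition \ref{prop_Jac0} is needed.
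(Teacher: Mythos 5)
The paper states this theorem only as a citation to Weil and supplies no proof of its own, so there is nothing internal to compare against; your argument is the classical character-sum derivation (essentially Weil's original one), and it is correct. You also handle the two genuinely delicate points properly: the identity $N_i(c)=1+\sum_{j=1}^{t-1}T^{j(q-1)/t}(c)$ holds at $c=0$ exactly because of the paper's convention $\chi(0)=0$, and the appeal to Proposition \ref{prop_Jac0} correctly kills every tuple with $j_1+\cdots+j_n\not\equiv 0\pmod t$ while converting the surviving ones to $-(q-1)\,J(\cdots)$, which is precisely the sum over $w^{*}\in W$ with $0<w_i<t$.
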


\noindent Using similar methods to those in \cite{W} and \cite[Thm.~2]{Ko4} it is easy to see that 
\begin{theorem}\label{thm_WeilNonZero}
Let $N_q^{A, *}(D_{d}) $ be the number of points in $\mathbb{A}^{n}(\mathbb{F}_q^{*})$ on 
$D_{d}$
Let $t:=\gcd(d,q-1)$. Then
$$N_q^{A, *}(D_{d}) = \sum_{w \in W} J_0(T^{w_1 \frac{q-1}{t}}, T^{w_2 \frac{q-1}{t}}, \dots, T^{w_n \frac{q-1}{t}}).$$
\end{theorem}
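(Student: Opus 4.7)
The plan is to follow Weil's classical character-sum approach, as the result is essentially a variant of Theorem \ref{thm_Weil} that keeps the degenerate (zero-coordinate sum) contribution instead of discarding it.

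First I would rewrite the count by conditioning on the values $y_i := x_i^d$. Since we require $x_i \in \mathbb{F}_q^{*}$, the auxiliary variables $y_i$ are in $\mathbb{F}_q^{*}$, and we get
\[
N_q^{A,*}(D_d) = \sum_{\substack{y_1,\dots,y_n \in \mathbb{F}_q \\ y_1+\cdots+y_n=0}} \prod_{i=1}^{n} \#\{x_i \in \mathbb{F}_q^{*} : x_i^d = y_i\},
\]
where I can safely let each $y_i$ range over all of $\mathbb{F}_q$ because for $y_i=0$ the inner count is zero. Next, I would invoke the standard character orthogonality identity
\[
\#\{x \in \mathbb{F}_q^{*} : x^d = y\} = \sum_{j=0}^{t-1} T^{j\frac{q-1}{t}}(y) \qquad (y \in \mathbb{F}_q),
\]
valid for every $y$ (the $y=0$ case holds by our convention $\chi(0)=0$ for every character, including the trivial one), and whose characters $T^{j(q-1)/t}$ are exactly the $t$ characters killed by raising to the $d$-th power.

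Substituting this into the formula and interchanging the order of summation, I would arrive at
\[
N_q^{A,*}(D_d) = \sum_{j_1,\dots,j_n=0}^{t-1} \; \sum_{\substack{y_1,\dots,y_n \in \mathbb{F}_q \\ y_1+\cdots+y_n=0}} \prod_{i=1}^{n} T^{j_i \frac{q-1}{t}}(y_i) = \sum_{j_1,\dots,j_n=0}^{t-1} J_0\!\left(T^{j_1 \frac{q-1}{t}}, \dotsc, T^{j_n \frac{q-1}{t}}\right),
\]
which matches the definition of the generalized Jacobi sum at $\alpha=0$.

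Finally, I would apply Proposition \ref{prop_Jac0}: the term $J_0(\chi_1,\dots,\chi_n)$ vanishes unless $\chi_1 \chi_2 \cdots \chi_n$ is trivial. In our situation the product is $T^{(j_1+\cdots+j_n)(q-1)/t}$, which is trivial precisely when $j_1+\cdots+j_n \equiv 0 \pmod t$. Restricting the outer sum to these tuples $(j_1,\dots,j_n)$ is exactly the definition of $W$, yielding the claimed identity. There is no genuine obstacle here; the only point requiring care is the bookkeeping distinction between Weil's Theorem \ref{thm_Weil} (which uses $J$ and drops the degenerate $\alpha=0$ piece) and the present statement (which retains everything via $J_0$), so I would state this comparison explicitly to justify citing the phrase \emph{using similar methods}.
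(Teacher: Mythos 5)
Your proof is correct and is exactly the standard Weil character-sum argument that the paper itself invokes (the paper gives no explicit proof, stating only that the result follows by the methods of Weil and Koblitz). The identity $\#\{x\in\mathbb{F}_q^{*}:x^d=y\}=\sum_{j=0}^{t-1}T^{j\frac{q-1}{t}}(y)$, the interchange of summation yielding $J_0$, and the reduction to $W$ via Proposition \ref{prop_Jac0} are all handled correctly, including the $y_i=0$ convention.
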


\begin{theorem}[Koblitz \cite{Ko4} {Thm.~1}]\label{thm_1Koblitz}
Let $N_q^{A,*}$ be the number of points in $\mathbb{A}^{n}(\mathbb{F}_q^{*})$ on 
$$\sum_{i=1}^r a_i x_1^{m_{1i}} x_2^{m_{2i}} \dots x_n^{m_{ni}}=0$$
for some $a_i \in \mathbb{F}_q^{*}$, $m_{ji} \in \mathbb{Z}_{\geq0}$, such that for a given $i$, $m_{ji}$ are not all zero. 
Then
\begin{multline*}
N_q^{A,*} = \tfrac{1}{q} \left[ (q-1)^n + (-1)^r (q-1)^{n-r+1}\right] \\
-(q-1)^{n-r+1}
\sum_{\alpha} T^{-\alpha_1}(a_1) T^{-\alpha_2}(a_2) \dots T^{-\alpha_r}(a_r) J(T^{\alpha_1}, T^{\alpha_2}, \dots, T^{\alpha_r})
\end{multline*}
where the sum is over all $\alpha= (\alpha_1, \alpha_2, \dots, \alpha_r) \neq 0$ satisfying
$0 \leq \alpha_i < {q-1}$,
$\sum_{i=1}^{r} \alpha_i \equiv 0 \pmod{{q-1}}$,
and,
$\sum_{i=1}^{r} m_{ji} \alpha_i \equiv 0 \pmod{{q-1}}$ for all $j \in \{1, 2, \dots, n\}$.
\end{theorem}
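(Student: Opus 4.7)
The plan is to compute $N_q^{A,\ast}$ by slicing the variety over the linear equation $y_1+\cdots+y_r=0$ in the auxiliary variables $y_i := a_i x_1^{m_{1i}} \cdots x_n^{m_{ni}}$. Because each $a_i \neq 0$ and, for each $i$, some $m_{ji}>0$, the $y_i$ are automatically in $\mathbb{F}_q^{\ast}$ whenever all $x_j$ are, so
\[
N_q^{A,\ast} \;=\; \sum_{\substack{y_1,\dots,y_r\in\mathbb{F}_q^{\ast}\\ y_1+\cdots+y_r=0}} M(y),
\]
where $M(y)$ counts $x\in(\mathbb{F}_q^{\ast})^n$ satisfying $a_i\prod_j x_j^{m_{ji}}=y_i$ for every $i$.

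The next step is to evaluate $M(y)$ by multiplicative-character orthogonality. Applying the identity $\mathbf{1}_{z=w} = \tfrac{1}{q-1}\sum_{\chi}\chi(z)\bar{\chi}(w)$ on $\mathbb{F}_q^{\ast}$ to each of the $r$ equations and writing $\chi_i = T^{\alpha_i}$ with $0 \leq \alpha_i < q-1$, the resulting sum over $x$ factors over $j$, and each inner factor $\sum_{x_j\in\mathbb{F}_q^{\ast}} T^{\sum_i m_{ji}\alpha_i}(x_j)$ equals $q-1$ if $\sum_i m_{ji}\alpha_i \equiv 0\pmod{q-1}$ and vanishes otherwise. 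This yields
\[
M(y) \;=\; (q-1)^{n-r}\sum_{\alpha} T^{\alpha_1}(a_1)\cdots T^{\alpha_r}(a_r)\, T^{-\alpha_1}(y_1)\cdots T^{-\alpha_r}(y_r),
\]
where $\alpha$ ranges over $r$-tuples satisfying all $n$ congruences $\sum_i m_{ji}\alpha_i\equiv 0\pmod{q-1}$. Substituting into $N_q^{A,\ast}$ and swapping order, the inner $y$-sum becomes exactly the generalized Jacobi sum $J_0(T^{-\alpha_1},\dots,T^{-\alpha_r})$.

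Now I would invoke Proposition~\ref{prop_Jac0}: this $J_0$ vanishes unless $\prod_i T^{-\alpha_i}=\varepsilon$, i.e.\ unless $\sum_i\alpha_i\equiv 0\pmod{q-1}$ — precisely the last congruence appearing in the statement. I would separate the $\alpha=0$ term from the rest. For $\alpha\neq 0$, $J_0=-(q-1)J$, and the involution $\beta_i\equiv -\alpha_i\pmod{q-1}$ (a bijection on $\{0,\dots,q-2\}$ that preserves both families of congruences and converts $T^{-\alpha_i}$ to $T^{\beta_i}$ and $T^{\alpha_i}(a_i)$ to $T^{-\beta_i}(a_i)$) puts this piece into the form $-(q-1)^{n-r+1}\sum_{\beta\neq 0} T^{-\beta_1}(a_1)\cdots T^{-\beta_r}(a_r)\,J(T^{\beta_1},\dots,T^{\beta_r})$, matching the Jacobi-sum line of the theorem. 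For $\alpha=0$, combining $J_0(\varepsilon,\dots,\varepsilon)=(q-1)^r-(q-1)J(\varepsilon,\dots,\varepsilon)$ with Proposition~\ref{prop_JacRedAllTrivial} yields, after a short simplification, $\tfrac{1}{q}\bigl[(q-1)^n+(-1)^r(q-1)^{n-r+1}\bigr]$, matching the main term. The one step that demands care is the bookkeeping around $\alpha=0$: I must verify that the $\alpha\mapsto\beta$ involution indeed preserves the admissibility conditions (so no stray terms appear), and that the two branches of Proposition~\ref{prop_Jac0} glue without double-counting at the boundary between $\alpha=0$ and $\alpha\neq 0$.
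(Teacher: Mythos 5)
The paper does not prove this statement; it is quoted verbatim from Koblitz's Theorem 1, so there is no internal proof to compare against. Your argument is correct and is essentially the standard (and Koblitz's original) derivation: fibre the count over the values $y_i$ of the monomial terms, detect each equation $a_i\prod_j x_j^{m_{ji}}=y_i$ by multiplicative-character orthogonality so that the $x$-sum factors over $j$ and forces the congruences $\sum_i m_{ji}\alpha_i\equiv 0\pmod{q-1}$, recognize the remaining $y$-sum as $J_0(T^{-\alpha_1},\dots,T^{-\alpha_r})$ (which, with the paper's convention $\chi(0)=0$, is exactly the sum over nonzero $y_i$), and then split via Propositions \ref{prop_Jac0} and \ref{prop_JacRedAllTrivial}. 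The checks you flag at the end do go through: the substitution $\beta_i\equiv-\alpha_i\pmod{q-1}$ is an involution on the admissible set that fixes $\alpha=0$ and preserves both families of congruences, and for $\alpha\neq 0$ with $0\le\alpha_i<q-1$ at least one $T^{-\alpha_i}$ is nontrivial, so the two branches of Proposition \ref{prop_Jac0} are cleanly disjoint; the main term $(q-1)^{n-r}\bigl[(q-1)^r+(-1)^r(q-1)\bigr]/q$ also simplifies to the stated expression. One cosmetic remark: the hypothesis that the $m_{ji}$ are not all zero for a given $i$ is not what makes $y_i$ nonzero (that follows already from $a_i\neq 0$ and $x_j\neq 0$), so your justification there is slightly misattributed, but nothing in the proof depends on it.
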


\noindent An important first step in proving the main results of this paper is to adapt Theorem \ref{thm_1Koblitz} to $D_{d, \lambda, h}$ as follows.
\begin{cor}\label{cor_1}
Let $t:=\gcd(d,q-1)$. For $\lambda \neq 0$,
\begin{equation*}
N_q^{A,*}(D_{d, \lambda, h}) 
= \sum_{s, w} J(T^{w_1 \frac{q-1}{t} + h_1 s}, T^{w_2 \frac{q-1}{t} + h_2 s}, \dots, T^{w_n \frac{q-1}{t} + h_n s}) \, T^{ds}(d \lambda)
\end{equation*}
where the sum is over all $s \in \{0,1,\dots, \frac{q-1}{t}-1\}$ and all $w=(w_1, w_2, \dots, w_n) \in W$.
\end{cor}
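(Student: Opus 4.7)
The plan is to apply Theorem \ref{thm_1Koblitz} directly to $D_{d,\lambda,h}$, viewed as a polynomial with $r=n+1$ monomial terms: the $n$ diagonal monomials $x_j^d$ (with coefficients $a_j=1$) and the deformation monomial $x_1^{h_1}\cdots x_n^{h_n}$ (with coefficient $a_{n+1}=-d\lambda$). Since $r=n+1$, the exponent $n-r+1=0$ and the prefactor $(q-1)^{n-r+1}$ collapses to $1$. The two remaining tasks are then (i) to reparametrize Koblitz's indexing set for $\alpha$ in terms of $(s,w)$, and (ii) to reduce the $(n+1)$-fold Jacobi sums that appear to the $n$-fold ones in the statement.

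For (i), Koblitz's conditions on $\alpha=(\alpha_1,\dots,\alpha_{n+1})$ with $0\le\alpha_i<q-1$ become $d\alpha_j+h_j\alpha_{n+1}\equiv 0\pmod{q-1}$ for $j=1,\dots,n$, together with the overall sum condition $\sum_i\alpha_i\equiv 0\pmod{q-1}$. Writing $\alpha_{n+1}\equiv -ds\pmod{q-1}$, the $j$-th congruence reads $d\alpha_j\equiv h_jds\pmod{q-1}$, which is solvable iff $t\mid h_j\alpha_{n+1}$ for every $j$; using $\gcd(h_1,\dots,h_n)=1$ and Bezout, this is equivalent to $t\mid\alpha_{n+1}$. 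The admissible $\alpha_{n+1}$ are therefore the multiples of $t$ in $\{0,\dots,q-2\}$, and these are bijectively parametrized by $s\in\{0,1,\dots,(q-1)/t-1\}$ via $\alpha_{n+1}\equiv -ds$. For such $s$, each $\alpha_j$ takes the unique form $h_js+w_j(q-1)/t\pmod{q-1}$ with $w_j\in\{0,\dots,t-1\}$, and substituting reduces the sum condition to $\sum_jw_j\equiv 0\pmod t$, i.e.\ $w\in W$.

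For (ii), note that $T^{\alpha_1}\cdots T^{\alpha_{n+1}}$ is trivial by the sum condition. Whenever $\alpha\neq 0$, Proposition \ref{prop_JacRedSmall} then gives $J(T^{\alpha_1},\dots,T^{\alpha_{n+1}})=-T^{\alpha_{n+1}}(-1)\,J(T^{\alpha_1},\dots,T^{\alpha_n})$, and since $T(-1)=\pm 1$ we have $T^{\alpha_{n+1}}(-1)=T^{-ds}(-1)=T^{ds}(-1)$. Combined with $T^{-\alpha_j}(1)=1$ for $j\le n$ and $T^{-\alpha_{n+1}}(-d\lambda)=T^{ds}(-d\lambda)$, the $\alpha$-summand in Koblitz's formula becomes $-T^{ds}(d\lambda)\,J(T^{\alpha_1},\dots,T^{\alpha_n})$. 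Negating and summing over $\alpha\neq 0$ yields the claimed expression summed over all $(s,w)$, minus the value at the excluded pair $(s,w)=(0,0)$. That value is $J(\varepsilon,\dots,\varepsilon)=[(q-1)^n+(-1)^{n+1}]/q$ by Proposition \ref{prop_JacRedAllTrivial}, which exactly cancels Koblitz's leading constant $\tfrac{1}{q}[(q-1)^n+(-1)^{n+1}]$, giving the corollary.

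The main obstacle is the bookkeeping in step (i)---in particular, the use of $\gcd(h_1,\dots,h_n)=1$ to pass from $t\mid h_j\alpha_{n+1}$ for every $j$ to $t\mid\alpha_{n+1}$, and verifying both the injectivity and the surjectivity of the map $(s,w)\mapsto\alpha$. Once this is in hand, the remaining Jacobi-sum manipulations are routine applications of the reductions recalled in Section \ref{sec_Prelim}.
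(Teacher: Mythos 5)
Your proposal is correct and follows essentially the same route as the paper: specialize Theorem \ref{thm_1Koblitz} with $r=n+1$, show $t \mid \alpha_{n+1}$ via $\gcd(h_1,\dots,h_n)=1$, reparametrize $\alpha$ by $(s,w)$, reduce the $(n+1)$-fold Jacobi sums via Proposition \ref{prop_JacRedSmall}, and cancel the leading constant against the $(s,w)=(0,0)$ term via Proposition \ref{prop_JacRedAllTrivial}. The only (immaterial) difference is that you deduce $t\mid\alpha_{n+1}$ by Bezout, where the paper argues prime power by prime power.
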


\noindent Corollary \ref{cor_1} generalizes Corollary 1 in \cite{Ko4}, which holds in the case $d \mid {q-1}$.


\section{Proofs}\label{sec_Proofs}

\begin{proof}[Proof of Corollary \ref{cor_1}]
We take $r=n+1$; 
$a_i=1$, for $i=1, \dots, n$, and $a_r = -d \lambda$;
$m_{ji} = d$ if $i=j$ and zero otherwise,
and, $m_{jr} = h_j$,
for all $j=1, \dots, n$,
in Theorem \ref{thm_1Koblitz}. This yields
\begin{equation}\label{for_NAstarD1}
N_q^{A,*}(D_{d, \lambda, h}) 
=\tfrac{1}{q} \left[ (q-1)^n + (-1)^{n+1}\right] 
-\sum_{\alpha} T^{-\alpha_{n+1}}(- d \lambda) J(T^{\alpha_1}, T^{\alpha_2}, \dots, T^{\alpha_{n+1}})
\end{equation}
where the sum is over all $\alpha= (\alpha_1, \alpha_2, \dots, \alpha_{n+1})  \neq 0$ satisfying
$0 \leq \alpha_i < q-1$,
$\sum_{i=1}^{n+1} \alpha_i \equiv 0 \pmod{q-1}$,
and,
$d \, \alpha_j + h_j \, \alpha_{n+1} \equiv 0 \pmod{q-1}$ for all $j=1, 2, \dots, n.$

The condition $d \, \alpha_j + h_j \, \alpha_{n+1} \equiv 0 \pmod{q-1}$, for all $j \in \{1, 2, \dots, n\}$, implies $t=\gcd(d, q-1)$ divides $h_j \, \alpha_{n+1}$ for all $j  \in \{1, 2, \dots, n\}$. If $l^e$ is a prime power dividing $t$ but not $\alpha_{n+1}$, then $l$ divides $h_j$ for all $j \in \{1, 2, \dots, n\}$. This is a contradiction, as $\gcd(h_1, \dots, h_n)=1$. Therefore, $l^e$ divides $\alpha_{n+1}$, which implies $t$ divides $\alpha_{n+1}$. So $\frac{\alpha_{n+1}}{t} \in \{0, 1, \dots, \frac{q-1}{t}-1 \}$. Let 
$s \equiv - \left(\frac{d}{t}\right)^{-1} \frac{\alpha_{n+1}}{t} \pmod {\frac{q-1}{t}}$
such that $s \in \{0, 1, \dots, \frac{q-1}{t}-1 \}$. Then $s$ runs around $\{0, 1, \dots, \frac{q-1}{t}-1 \}$ as $\frac{\alpha_{n+1}}{t}$ does.

We now express the conditions on $\alpha$ in terms of $s$. Firstly,
\begin{align*} 
d \, \alpha_j &\equiv - h_j \, \alpha_{n+1}  \pmod{q-1}\\
\Rightarrow \, \tfrac{d}{t} \alpha_j &\equiv - h_j \tfrac{\alpha_{n+1}}{t}  \pmod{\tfrac{q-1}{t}}\\
\Rightarrow \, \alpha_j &\equiv  h_j s  \pmod{\tfrac{q-1}{t}}.
\end{align*}
So $\alpha_j = h_j s + w_j \frac{q-1}{t}$ for $w_j \in \{0,1, \dots, t-1\}$, for $j \in \{1, 2, \dots, n\}$. Also,
\begin{align}\label{for_Un+1}
\notag \tfrac{\alpha_{n+1}}{t} & \equiv - \left(\tfrac{d}{t}\right) s \pmod {\tfrac{q-1}{t}}\\
 \Rightarrow \,  \alpha_{n+1} & \equiv - ds \pmod{{q-1}}.
\end{align}
Using the fact that $\sum_{i=j}^{n} h_j=d$, it is easy to see that
\begin{equation}\label{for_sumwi}
\sum_{j=1}^{n} w_j = \sum_{j=1}^{n} \tfrac{t}{q-1} \left( \alpha_j - h_j s \right) =  \tfrac{t}{q-1}  \biggl( \sum_{j=1}^{n} \alpha_j - d s \biggr).
\end{equation}
Combining (\ref{for_Un+1}) and (\ref{for_sumwi}) we get that
$$\sum_{j=1}^{n} w_j \equiv 0 \pmod{t} \quad \Longleftrightarrow \quad  \sum_{i=1}^{n+1} \alpha_i \equiv 0 \pmod{q-1}.$$
Substituting for $\alpha$, (\ref{for_NAstarD1}) becomes
\begin{multline}\label{for_NAstarD2}
N_q^{A,*}(D_{d, \lambda, h}) 
=\tfrac{1}{q} \left[ (q-1)^n + (-1)^{n+1}\right] \\
-\sum_{s, w} T^{ds}(- d \lambda) J(T^{w_1 \frac{q-1}{t} + h_1 s}, \dots, T^{w_n \frac{q-1}{t} + h_n s}, T^{-ds})
\end{multline}
where the sum is over all $s \in \{0,1,\dots, \frac{q-1}{t}-1\}$ and all $w=(w_1, w_2, \dots, w_n)$, such that $0 \leq w_i < t$ and $\sum_{i=1}^{n} w_i \equiv 0 \pmod t$, and such that not all of $s, w_1, w_2, \dots w_n$ are zero.

Noting that, as $\sum_{i=1}^{n} w_i \frac{q-1}{t} + h_i s - ds \equiv 0 \pmod {q-1}$, by Proposition \ref{prop_JacRedSmall} we have
$$J(T^{w_1 \frac{q-1}{t} + h_1 s}, \dots, T^{w_n \frac{q-1}{t} + h_n s}, T^{-ds}) = -J(T^{w_1 \frac{q-1}{t} + h_1 s}, \dots, T^{w_n \frac{q-1}{t} + h_n s}) \, T^{-ds}(-1),$$
and by Proposition \ref{prop_JacRedAllTrivial} we have
$$J(\underbrace{T^0, T^0, \dots, T^0}_{n \; times}) = \tfrac{1}{q} \left[ (q-1)^n + (-1)^{n+1}\right]$$
completes the proof.
\end{proof}

\begin{proof}[Proof of Theorem \ref{thm_2}]
We follow Koblitz \cite[Thm.~2]{Ko} and note that
\begin{equation}\label{for_NsRelation}
N_q(D_{d, \lambda, h}) - N_q^{*}(D_{d, \lambda, h})  = N_q(D_{d, 0, h}) - N_q^{*}(D_{d, 0, h}).
\end{equation}
We know 
\begin{equation}\label{for_NqDd0h}
N_q(D_{d, 0, h}) = \frac{N_q^{A}(D_{d}) -1}{q-1} = \frac{q^{n-1}-1}{q-1}-\sum_{w^{*}} J(T^{w_1 \frac{q-1}{t}}, T^{w_2 \frac{q-1}{t}}, \dots, T^{w_n \frac{q-1}{t}})
\end{equation}
by Weil's result, Theorem \ref{thm_Weil} above;
$$N_q^{*}(D_{d, \lambda, h}) = \frac{1}{q-1} \sum_{s, w} J(T^{w_1 \frac{q-1}{t} + h_1 s}, T^{w_2 \frac{q-1}{t} + h_2 s}, \dots, T^{w_n \frac{q-1}{t} + h_n s}) \, T^{ds}(d \lambda)$$
when $\lambda \neq 0$, by Corollary \ref{cor_1}; and
$$N_q^{*}(D_{d, 0, h}) = N_q^{*}(D_{d}) = \frac{1}{q-1} \sum_{w} J_0(T^{w_1 \frac{q-1}{t}}, T^{w_2 \frac{q-1}{t}}, \dots, T^{w_n \frac{q-1}{t}})$$
by Theorem \ref{thm_WeilNonZero}.

Using Propositions \ref{prop_Jac0}, \ref{prop_JacRedAllTrivial} and \ref{prop_JactoGauss}, we get that for $\lambda \neq 0$,
\begin{align}\label{for_N0Diff}
\notag (q-&1) \left( N_q^{*}(D_{d, \lambda, h}) - N_q^{*}(D_{d, 0, h}) \right)\\
\notag &= \sum_{\substack{s, w\\s \neq 0}} J(T^{w_1 \frac{q-1}{t} + h_1 s}, T^{w_2 \frac{q-1}{t} + h_2 s}, \dots, T^{w_n \frac{q-1}{t} + h_n s}) \, T^{ds}(d \lambda)\\
\notag & \qquad + \sum_{w} J(T^{w_1 \frac{q-1}{t}}, T^{w_2 \frac{q-1}{t}}, \dots, T^{w_n \frac{q-1}{t}}) 
- \sum_{w} J_0(T^{w_1 \frac{q-1}{t}}, T^{w_2 \frac{q-1}{t}}, \dots, T^{w_n \frac{q-1}{t}})\\
\notag &= \sum_{\substack{s, w\\s \neq 0}} J(T^{w_1 \frac{q-1}{t} + h_1 s}, T^{w_2 \frac{q-1}{t} + h_2 s}, \dots, T^{w_n \frac{q-1}{t} + h_n s}) \, T^{ds}(d \lambda)\\
\notag & \qquad + q \sum_{\substack{w \\w \neq 0}} J(T^{w_1 \frac{q-1}{t}}, T^{w_2 \frac{q-1}{t}}, \dots, T^{w_n \frac{q-1}{t}}) 
+ q J(\varepsilon, \varepsilon, \dots, \varepsilon) - (q-1)^n\\
\notag &= \sum_{\substack{s, w\\s \neq 0}} J(T^{w_1 \frac{q-1}{t} + h_1 s}, T^{w_2 \frac{q-1}{t} + h_2 s}, \dots, T^{w_n \frac{q-1}{t} + h_n s}) \, T^{ds}(d \lambda)\\
\notag & \qquad + q \sum_{\substack{w \\w \neq 0}} J(T^{w_1 \frac{q-1}{t}}, T^{w_2 \frac{q-1}{t}}, \dots, T^{w_n \frac{q-1}{t}}) 
+ (-1)^{n+1}\\
&= \sum_{s, w} \frac{g(T^{w_1 \frac{q-1}{t} + h_1 s}) g(T^{w_2 \frac{q-1}{t} + h_2 s}) \dots g(T^{w_n \frac{q-1}{t} + h_n s})}{g(T^{ds})} \, T^{ds}(d \lambda).
\end{align}
Combining (\ref{for_NsRelation}), (\ref{for_NqDd0h}) and (\ref{for_N0Diff}), which trivially holds for $\lambda =0$ also, yields the result.
\end{proof}

\begin{proof}[Proof of Corollary \ref{cor_thm2}]

Applying  (\ref{for_GaussConj}) and Proposition \ref{prop_JactoGauss} to Theorem \ref{thm_2} we get that
\begin{align*}
N_q(D_{d, \lambda, h}) 
&= \frac{q^{n-1}-1}{q-1}
+ \frac{1}{q} \sum_{w^{*}} \prod_{i=1}^{n} g(T^{w_i \frac{q-1}{t}})
-\frac{1}{q-1} \sum_{w} \prod_{i=1}^{n} g(T^{w_i \frac{q-1}{t}}) \\
&\qquad \qquad +\frac{1}{q(q-1)} \sum_{\substack{s, w \\ s \neq 0}} \prod_{i=1}^{n} g(T^{w_i \frac{q-1}{t} + h_i s}) {g(T^{-ds})} \, T^{ds}(-d \lambda)\\
&= \frac{q^{n-1}-1}{q-1}
+ \frac{1}{q} \sum_{w^{*}} \prod_{i=1}^{n} g(T^{w_i \frac{q-1}{t}})
-\frac{1}{q-1} \sum_{w} \prod_{i=1}^{n} g(T^{w_i \frac{q-1}{t}}) \left(1-\frac{1}{q} \right)\\
&\qquad \qquad +\frac{1}{q(q-1)} \sum_{s, w} \prod_{i=1}^{n} g(T^{w_i \frac{q-1}{t} + h_i s}) {g(T^{-ds})} \, T^{ds}(-d \lambda)\\
&= \frac{q^{n-1}-1}{q-1}
-\frac{1}{q} \sum_{\substack{w \\ \text{some $w_i=0$}}} \prod_{i=1}^{n} g(T^{w_i \frac{q-1}{t}}) \\
&\qquad \qquad+\frac{1}{q(q-1)} \sum_{s, w} \prod_{i=1}^{n} g(T^{w_i \frac{q-1}{t} + h_i s}) {g(T^{-ds})} \, T^{ds}(-d \lambda)
\end{align*}
where the last sum is over all $s \in \{0,1,\dots, \frac{q-1}{t}-1\}$ and all $w=(w_1, w_2, \dots, w_n)\in W$, as required.
To get the alternative summation limits, we note that
\begin{align}\label{for_ChangeSumLimits}
\notag \sum_{s=0}^{\frac{q-1}{t}-1} &\sum_{w \in W} \prod_{i=1}^{n} g(T^{w_i \frac{q-1}{t} + h_i s}) {g(T^{-ds})} \, T^{ds}(-d \lambda)\\
\notag &=\sum_{s=0}^{\frac{q-1}{t}-1} \sum_{j=0}^{t-1} \sum_{[w] \in W/\sim} g(T^{(w_i+j h_i) \frac{q-1}{t} + h_i s}) {g(T^{-ds})} \, T^{ds}(-d \lambda)\\
\notag &=\sum_{s=0}^{\frac{q-1}{t}-1} \sum_{j=0}^{t-1} \sum_{[w] \in W/\sim} g(T^{w_i \frac{q-1}{t} + h_i (s + j\frac{q-1}{t})}) {g(T^{-ds})} \, T^{ds}(-d \lambda)\\
&= \sum_{s=0}^{q-2} \sum_{[w] \in W/\sim} \prod_{i=1}^{n} g(T^{w_i \frac{q-1}{t} + h_i s}) {g(T^{-ds})} \, T^{ds}(-d \lambda).
\end{align}
This sum is independent of the choice of equivalence class representatives $[w]$, as changing representative can be countered by a simple change of variable in $s$.
\end{proof}

\begin{proof}[Proof of Corollary \ref{cor_thm2_Dwork}]
We start from Corollary \ref{cor_thm2} with $h=(1,1,\dots,1)$ and $d=n$, and re-write using the notation described in Section \ref{sec_Results}, i.e.,
\begin{multline}\label{for_DworkNdl1}
N_q(D_{\lambda}) 
= \frac{q^{n-1}-1}{q-1}
-\frac{1}{q} \sum_{\substack{w \in W \\ 0 \in S_w^c}} \prod_{k \in S_w^c} g(T^{k \frac{q-1}{t}})^{n_k} \\
+\frac{1}{q(q-1)} \sum_{s, w}  \prod_{k \in S_w^c}  g(T^{k \frac{q-1}{t} + s})^{n_k} {g(T^{-ns})} \, T^{ns}(-n \lambda)
\end{multline}
where $t=\gcd(n,q-1)$, and the second sum is over all $s \in \{0,1,\dots, \frac{q-1}{t}-1\}$ and all $w \in W$.
We proceed in the same fashion as the proof of Theorem 2.2 in \cite{McC12}. By (\ref{for_GaussConj}) it is easy to see that
\begin{equation}\label{for_DworkTerm1}
\sum_{\substack{w \\ 0 \in S_w^c}}  \prod_{k \in S_w^c} g(T^{k \frac{q-1}{t}})^{n_k}
=
\sum_{\substack{w \\ 0 \in S_w^c}} 
\left[\prod_{k \in S_w^c} \frac{g(T^{k \frac{q-1}{t}})^{n_k-1}}{g(T^{-k \frac{q-1}{t}})} \right]
\left[\prod_{k \in S_w^c\setminus\{0\}} T^{k \frac{q-1}{t}}(-1) \, q \right]
\end{equation}
We now focus on the second sum in (\ref{for_DworkNdl1}). If $T^{k \frac{q-1}{t} + s} = \varepsilon$ then $k \frac{q-1}{t} + s \equiv 0 \imod{q-1}$, which can only happen if $s \equiv 0 \imod{\frac{q-1}{t}}$, in which case $s=0$. So, if $s \neq 0$ then $T^{k \frac{q-1}{t} + s} \neq \varepsilon$.
Again using (\ref{for_GaussConj}), we see that, for $\lambda \neq 0$,
\begin{align}\label{for_DworkTerm2}
\notag
\sum_{w \in W} & \sum_{s=0}^{\frac{q-1}{t}}  \prod_{k \in S_w^c}  g(T^{k \frac{q-1}{t} + s})^{n_k} {g(T^{-ns})} \, T^{ns}(-n \lambda)\\
\notag &=
\sum_{w \in W} \sum_{s=1}^{\frac{q-1}{t}} 
\left[\prod_{k \in S_w^c} \frac{g(T^{k \frac{q-1}{t} + s})^{n_k-1}}{g(T^{-k \frac{q-1}{t} - s})} T^{k \frac{q-1}{t}+s}(-1) \, q \right]
{g(T^{-ns})} \, T^{ns}(-n \lambda)\\
\notag & \qquad - \sum_{w \in W}
\left[\prod_{k \in S_w^c} \frac{g(T^{k \frac{q-1}{t}})^{n_k-1}}{g(T^{-k \frac{q-1}{t}})} \right]
\left[\prod_{k \in S_w^c\setminus\{0\}} T^{k \frac{q-1}{t}}(-1) \, q \right]\\
\notag &=
\sum_{w \in W} \sum_{s=0}^{\frac{q-1}{t}} 
\left[\prod_{k \in S_w^c} \frac{g(T^{k \frac{q-1}{t} + s})^{n_k-1}}{g(T^{-k \frac{q-1}{t} - s})} T^{k \frac{q-1}{t}+s}(-1) \, q \right]
{g(T^{-ns})} \, T^{ns}(-n \lambda)\\
\notag & \qquad + \sum_{w \in W}
\left[\prod_{k \in S_w^c} \frac{g(T^{k \frac{q-1}{t}})^{n_k-1}}{g(T^{-k \frac{q-1}{t}})} \right]
\left[\prod_{k \in S_w^c} T^{k \frac{q-1}{t}}(-1) \, q - \prod_{k \in S_w^c\setminus\{0\}} T^{k \frac{q-1}{t}}(-1) \, q \right]\\
\notag &=
\sum_{w \in W} \sum_{s=0}^{\frac{q-1}{t}}  
\left[\prod_{k \in S_w^c} \frac{g(T^{k \frac{q-1}{t} + s})^{n_k-1}}{g(T^{-k \frac{q-1}{t} - s})} T^{k \frac{q-1}{t}+s}(-1) \, q \right]
{g(T^{-ns})} \, T^{ns}(-n \lambda)\\
& \qquad + (q-1)\sum_{\substack{w \\ 0 \in S_w^c}} 
\left[\prod_{k \in S_w^c} \frac{g(T^{k \frac{q-1}{t}})^{n_k-1}}{g(T^{-k \frac{q-1}{t}})} \right]
\left[\prod_{k \in S_w^c\setminus\{0\}} T^{k \frac{q-1}{t}}(-1) \, q \right].
\end{align}
Accounting for (\ref{for_DworkTerm1}) and (\ref{for_DworkTerm2}) in (\ref{for_DworkNdl1}) yields
\begin{multline*}
N_q(D_{\lambda}) 
= \frac{q^{n-1}-1}{q-1}\\
+\frac{1}{q(q-1)}\sum_{w \in W} \sum_{s=0}^{\frac{q-1}{t}} 
 \left[\prod_{k \in S_w^c} \frac{g(T^{k \frac{q-1}{t} + s})^{n_k-1}}{g(T^{-k \frac{q-1}{t} - s})} T^{k \frac{q-1}{t}+s}(-1) \, q \right]
{g(T^{-ns})} \, T^{ns}(-n \lambda).
\end{multline*}
To get the alternative summation limit, proceed in the same manner as in (\ref{for_ChangeSumLimits}).
\end{proof}

\begin{proof}[Proof of Theorem \ref{thm_Main}]
We start from Corollary \ref{cor_thm2}, which we rewrite as 
\begin{equation}\label{for_NqDdlh}
N_q(D_{d, \lambda, h}) 
= \frac{q^{n-1}-1}{q-1}
-\frac{1}{q} \sum_{\substack{w \in W \\ \text{some $w_i=0$}}} \prod_{i=1}^{n} g(T^{w_i \frac{q-1}{t}}) \\
+\frac{1}{q(q-1)} \sum_{[w] \in W/\sim} R_{[w]}
\end{equation}
where
$$R_{[w]}:=\sum_{s=0}^{q-2} \prod_{i=1}^{n} g(T^{w_i \frac{q-1}{t} + h_i s}) {g(T^{-ds})} \, T^{ds}(-d \lambda).$$
We note $R_{[w]}$ is independent of the choice of equivalence class representative.

We now let $T = \bar{\omega}$ and apply the Gross-Koblitz formula, Theorem \ref{thm_GrossKoblitz}, to both summands in (\ref{for_NqDdlh}).
From the first summand we get that
\begin{equation}\label{for_Summand1}
\prod_{i=1}^{n} g(T^{w_i \frac{q-1}{t}}) = 
(-1)^n 
(-p)^{  \sum_{i=1}^{n} \sum_{a=0}^{r-1} \langle (\tfrac{w_i}{t} ) p^a \rangle}
\prod_{i=1}^{n}  \prod_{a=0}^{r-1}  \biggfp{\langle (\tfrac{w_i}{t}) p^a \rangle} 
=(-1)^n \, C(w).
\end{equation}
The second, $R_{[w]}$, yields
\begin{equation}\label{for_Rw1}
R_{[w]} = (-1)^{n+1} \sum_{s=0}^{q-2} \left[ \prod_{a=0}^{r-1} \prod_{i=1}^{n} \biggfp{\langle (\tfrac{w_i}{t} +\tfrac{h_i s}{q-1}) p^a \rangle} \right] 
\left[\prod_{a=0}^{r-1} \biggfp{\langle (\tfrac{-ds}{q-1}) p^a \rangle} \right]  (-p)^{v} \; \bar{\omega}^{ds}(-d\lambda)
\end{equation}
where
\begin{align*}
v&=\sum_{a=0}^{r-1}  \sum_{i=1}^{n} \langle (\tfrac{w_i}{t} +\tfrac{h_i s}{q-1}) p^a \rangle + \sum_{a=0}^{r-1}\langle (\tfrac{-ds}{q-1}) p^a \rangle\\
&=  \sum_{a=0}^{r-1}  \sum_{i=1}^{n}  (\tfrac{w_i}{t} +\tfrac{h_i s}{q-1}) p^a + \sum_{a=0}^{r-1} (\tfrac{-ds}{q-1}) p^a
-\sum_{a=0}^{r-1}  \sum_{i=1}^{n} \left\lfloor  (\tfrac{w_i}{t} +\tfrac{h_i s}{q-1}) p^a \right\rfloor -\sum_{a=0}^{r-1} \left\lfloor  (\tfrac{-ds}{q-1}) p^a \right\rfloor\\
&=  \sum_{a=0}^{r-1}  \sum_{i=1}^{n}  (\tfrac{w_i}{t} ) p^a 
-\sum_{a=0}^{r-1}  \sum_{i=1}^{n} \left\lfloor  (\tfrac{w_i}{t} +\tfrac{h_i s}{q-1}) p^a \right\rfloor -\sum_{a=0}^{r-1} \left\lfloor  (\tfrac{-ds}{q-1}) p^a \right\rfloor \in \mathbb{Z}
\end{align*}
as $\sum_{i=1}^{n} h_i =d$ and $\sum_{i=1}^{n} w_i \equiv 0 \pmod{t}$.

We will now use Theorem \ref{thm_GrossKoblitzMult} to expand the terms involving the $p$-adic gamma function in (\ref{for_Rw1}). Let $k \in \mathbb{Z}$ such that
$$k \leq \tfrac{w_i}{t} +\tfrac{h_i s}{q-1} < k+1.$$
Then $0 \leq x:= \tfrac{w_i}{t} +\tfrac{h_i s}{q-1} -k <1$ and $(q-1)x \in \mathbb Z$.
So, by Theorem \ref{thm_GrossKoblitzMult}, with $h=h_i$ and $p \nmid h_i$, 
\begin{multline*}
\prod_{a=0}^{r-1} \prod_{b=0}^{h_i-1} \biggfp{\langle (\tfrac{w_i}{th_i} + \tfrac{s}{q-1} + \tfrac{b-k}{h_i}) p^a \rangle} \\
=\omega \bigl(h_i^{w_i \frac{q-1}{t} + h_i s}\bigr)
\prod_{a=0}^{r-1} \biggfp{\langle (\tfrac{w_i}{t} + \tfrac{h_i s}{q-1}) p^a \rangle} 
\prod_{b=1}^{h_i-1} \biggfp{\langle (\tfrac{b}{h_i}) p^a \rangle}.
\end{multline*}
As $\{b \mid b=0,1,\dots, h_i-1\} \equiv \{b-k \mid b=0,1,\dots, h_i-1\} \pmod {h_i}$ we have
\begin{multline}\label{for_gammaMult1}
\prod_{a=0}^{r-1} \prod_{b=0}^{h_i-1} \biggfp{\langle (\tfrac{w_i}{th_i} + \tfrac{s}{q-1} + \tfrac{b}{h_i}) p^a \rangle} \\
=\omega \bigl(h_i^{w_i \frac{q-1}{t} + h_i s}\bigr)
\prod_{a=0}^{r-1} \biggfp{\langle (\tfrac{w_i}{t} + \tfrac{h_i s}{q-1}) p^a \rangle} 
\prod_{b=1}^{h_i-1} \biggfp{\langle (\tfrac{b}{h_i}) p^a \rangle}.
\end{multline}
Similarly, with $k \in \mathbb{Z}$ chosen such that $0 \leq x:= \frac{w_i}{t} -k < 1$, we apply Theorem \ref{thm_GrossKoblitzMult} to get that
\begin{equation}\label{for_gammaMult2}
\prod_{a=0}^{r-1} \prod_{b=0}^{h_i-1} \biggfp{\langle (\tfrac{w_i}{th_i} + \tfrac{b}{h_i}) p^a \rangle} \\
=\omega \bigl(h_i^{w_i \frac{q-1}{t}}\bigr)
\prod_{a=0}^{r-1} \biggfp{\langle (\tfrac{w_i}{t}) p^a \rangle} 
\prod_{b=1}^{h_i-1} \biggfp{\langle (\tfrac{b}{h_i}) p^a \rangle}.
\end{equation}
Combining (\ref{for_gammaMult1}) and (\ref{for_gammaMult2}) we have, for $p \nmid h_i$,
\begin{equation}\label{for_gammaMult3}
\prod_{a=0}^{r-1} \biggfp{\langle (\tfrac{w_i}{t} + \tfrac{h_i s}{q-1}) p^a \rangle} 
=
\prod_{a=0}^{r-1} \prod_{b=0}^{h_i-1} \frac{\biggfp{\langle (\tfrac{w_i}{th_i} + \tfrac{s}{q-1} + \tfrac{b}{h_i}) p^a \rangle} }{\biggfp{\langle (\tfrac{w_i}{th_i} + \tfrac{b}{h_i}) p^a \rangle}}
\prod_{a=0}^{r-1} \biggfp{\langle (\tfrac{w_i}{t}) p^a \rangle}
\, \bar{\omega}^{s} \bigl(h_i^{h_i}\bigr).
\end{equation}
A final application of Theorem \ref{thm_GrossKoblitzMult}, this time with $k \in \mathbb{Z}$ such that $0 \leq x:= k - \frac{ds}{q-1} < 1$ and $p \nmid d$, we get, after some simplification, that
\begin{equation}\label{for_gammaMult4}
\prod_{a=0}^{r-1} \biggfp{\langle (\tfrac{-ds}{q-1}) p^a \rangle} 
= \prod_{a=0}^{r-1} \prod_{b=0}^{d-1} \frac{ \biggfp{\langle (\tfrac{-b}{d} - \tfrac{s}{q-1}) p^a \rangle}}{\biggfp{\langle (\tfrac{-b}{d}) p^a \rangle}}
\, \bar{\omega}^{s} \bigl(d^{-d}\bigr).
\end{equation}
Accounting for (\ref{for_gammaMult3}) and (\ref{for_gammaMult4}) in (\ref{for_Rw1})  and making the change of variable $s \to (q-1)-s$ we get that
\begin{multline}\label{for_Rw2}
R_{[w]} = (-1)^{n+1} \prod_{i=1}^{n}  \prod_{a=0}^{r-1}  \biggfp{\langle (\tfrac{w_i}{t}) p^a \rangle} 
\sum_{s=0}^{q-2} (-1)^{sd} 
\left[ \prod_{i=1}^{n}  \prod_{b=0}^{h_i-1} \prod_{a=0}^{r-1}  \frac{\biggfp{\langle (\tfrac{w_i}{th_i} + \tfrac{b}{h_i} - \tfrac{s}{q-1} ) p^a \rangle} }{\biggfp{\langle (\tfrac{w_i}{th_i} + \tfrac{b}{h_i}) p^a \rangle}} \right] \\
\times \left[  \prod_{b=0}^{d-1}  \prod_{a=0}^{r-1} \frac{ \biggfp{\langle (\tfrac{-b}{d} + \tfrac{s}{q-1}) p^a \rangle}}{\biggfp{\langle (\tfrac{-b}{d}) p^a \rangle}} \right]
 (-p)^{y} \, \bar{\omega}^{s} \bigl(\bigl[\lambda^d \prod_{i=1}^{n} h_i^{h_i} \bigr]^{-1}\bigr)
\end{multline}
where
\begin{equation*}
y =  \sum_{a=0}^{r-1}  \sum_{i=1}^{n}  (\tfrac{w_i}{t} ) p^a 
-\sum_{a=0}^{r-1}  \sum_{i=1}^{n} \left\lfloor  (\tfrac{w_i}{t} - \tfrac{h_i s}{q-1}) p^a \right\rfloor -\sum_{a=0}^{r-1} \left\lfloor  (\tfrac{ds}{q-1}) p^a \right\rfloor,
\end{equation*}
and we have used the fact that $\bar{\omega}(-1)=-1$.
Let
\begin{equation*}
z:= -\left[  \sum_{i=1}^{n}  \sum_{a=0}^{r-1} \sum_{b=0}^{h_i-1} \left\lfloor \langle (\tfrac{w_i}{th_i} + \tfrac{b}{h_i} ) p^a \rangle - \tfrac{sp^a}{q-1}   \right\rfloor
+ \sum_{a=0}^{r-1} \sum_{b=0}^{d-1} \left\lfloor \langle (\tfrac{-b}{d}) p^a \rangle + \tfrac{sp^a}{q-1}  \right\rfloor
 \right].
\end{equation*}
Using the fact that
$\left\lfloor mx \right\rfloor = \sum_{b=0}^{m-1} \left\lfloor x+ \frac{b}{m}  \right\rfloor$
we get that
\begin{multline*}
y-z =  \sum_{a=0}^{r-1}  \sum_{i=1}^{n}  (\tfrac{w_i}{t} ) p^a 
-\sum_{a=0}^{r-1}  \sum_{i=1}^{n}  \sum_{b=0}^{h_i-1} \left\lfloor  (\tfrac{w_i}{th_i} - \tfrac{s}{q-1}) p^a +\tfrac{b}{h_i} \right\rfloor -\sum_{a=0}^{r-1} \sum_{b=0}^{d-1} \left\lfloor \tfrac{sp^a}{q-1} + \tfrac{b}{d}\right\rfloor\\
+  \sum_{i=1}^{n}  \sum_{a=0}^{r-1} \sum_{b=0}^{h_i-1} \left\lfloor \langle (\tfrac{w_i}{th_i} + \tfrac{b}{h_i} ) p^a \rangle - \tfrac{sp^a}{q-1} \right\rfloor
+ \sum_{a=0}^{r-1} \sum_{b=0}^{d-1} \left\lfloor \langle (\tfrac{-b}{d}) p^a \rangle + \tfrac{sp^a}{q-1}  \right\rfloor
\end{multline*}
As $\gcd(p,d)=1$,  $\{b \mid b=0,1,\dots, d-1\} \equiv \{b p^a \mid b=0,1,\dots, d-1\} \pmod {d}$ and so
\begin{equation*}
\sum_{b=0}^{d-1} \left\lfloor \langle (\tfrac{-b}{d}) p^a \rangle + \tfrac{sp^a}{q-1}  \right\rfloor
=
\sum_{b=0}^{d-1} \left\lfloor \langle (\tfrac{b}{d}) p^a \rangle + \tfrac{sp^a}{q-1}  \right\rfloor
=
\sum_{b=0}^{d-1} \left\lfloor \langle \tfrac{b}{d} \rangle + \tfrac{sp^a}{q-1}  \right\rfloor 
=
\sum_{b=0}^{d-1} \left\lfloor \tfrac{b}{d} + \tfrac{sp^a}{q-1}  \right\rfloor.
\end{equation*}
Similarly, as $\gcd(p,h_i)=1$,
\begin{align*}
\sum_{b=0}^{h_i-1} \left\lfloor \langle (\tfrac{w_i}{th_i} + \tfrac{b}{h_i} ) p^a \rangle - \tfrac{sp^a}{q-1} \right\rfloor
&=
\sum_{b=0}^{h_i-1} \left\lfloor \langle (\tfrac{w_i}{th_i} ) p^a + \tfrac{b}{h_i} \rangle - \tfrac{sp^a}{q-1} \right\rfloor\\
&=
\sum_{b=0}^{h_i-1} \left\lfloor (\tfrac{w_i}{th_i} ) p^a + \tfrac{b}{h_i} - \left\lfloor(\tfrac{w_i}{th_i}) p^a + \tfrac{b}{h_i} \right\rfloor - \tfrac{sp^a}{q-1} \right\rfloor\\
&=
\sum_{b=0}^{h_i-1} \left\lfloor (\tfrac{w_i}{th_i} - \tfrac{s}{q-1}) p^a + \tfrac{b}{h_i}  \right\rfloor
-\sum_{b=0}^{h_i-1}  \left\lfloor(\tfrac{w_i}{th_i}) p^a + \tfrac{b}{h_i} \right\rfloor\\
&=
\sum_{b=0}^{h_i-1} \left\lfloor (\tfrac{w_i}{th_i} - \tfrac{s}{q-1}) p^a + \tfrac{b}{h_i}  \right\rfloor
-  \left\lfloor(\tfrac{w_i}{t}) p^a  \right\rfloor
\end{align*}
So
\begin{equation*}
y-z=  \sum_{a=0}^{r-1}  \sum_{i=1}^{n}  (\tfrac{w_i}{t} ) p^a  -  \sum_{a=0}^{r-1}  \sum_{i=1}^{n}  \left\lfloor(\tfrac{w_i}{t}) p^a  \right\rfloor
=  \sum_{a=0}^{r-1}  \sum_{i=1}^{n} \langle (\tfrac{w_i}{t} ) p^a \rangle.
\end{equation*}
Thus
\begin{align}\label{for_Rw3}
\frac{1}{q-1} R_{[w]}&=
\notag
\begin{multlined}[t][12cm]
(-1)^{n}
C(w)
\times \frac{-1}{q-1} \sum_{s=0}^{q-2} (-1)^{sd} 
\left[ \prod_{i=1}^{n}  \prod_{b=0}^{h_i-1} \prod_{a=0}^{r-1}  \frac{\biggfp{\langle (\tfrac{w_i}{th_i} + \tfrac{b}{h_i} - \tfrac{s}{q-1} ) p^a \rangle} }{\biggfp{\langle (\tfrac{w_i}{th_i} + \tfrac{b}{h_i}) p^a \rangle}} \right] \\
\times \left[  \prod_{b=0}^{d-1}  \prod_{a=0}^{r-1} \frac{ \biggfp{\langle (\tfrac{-b}{d} + \tfrac{s}{q-1}) p^a \rangle}}{\biggfp{\langle (\tfrac{-b}{d}) p^a \rangle}} \right]
 (-p)^{z} \, \bar{\omega}^{s} \bigl(\bigl[\lambda^d \prod_{i=1}^{n} h_i^{h_i} \bigr]^{-1}\bigr)
 \end{multlined}\\
 &= 
 \notag
 \begin{multlined}[t][11cm]
(-1)^{n} 
C(w) \;
{_{d}G_{d}}\biggl[ \begin{array}{ccccc} \multicolumn{5}{c}{\dotsc \dotsc \frac{w_i}{th_i} + \frac{b}{h_i} \dotsc \dotsc} \\[4pt]
 1 & \frac{1}{d} & \frac{2}{d} & \dotsc & \frac{d-1}{d} \end{array}
\Big| \; \bigl(\lambda^d  h_1^{h_1}  \dots h_n^{h_n} \bigr)^{-1} \; \biggr]_q.
 \end{multlined}\\[-23pt]
 \end{align}
\vspace{0pt}

\noindent Substituting for (\ref{for_Summand1}) and (\ref{for_Rw3}) in (\ref{for_NqDdlh}), we get 
 the required result.
\end{proof}

\begin{proof}[Proof of Corollary \ref{cor_Main1}]
In Theorem \ref{thm_Main}, we make the change of variables $w \to -w \pmod t$, which is a bijection on $W/\sim$, and $s \to (q-1)-s$ in the expansion of ${_{d}G_{d}}$ by definition.
 \end{proof}

\begin{proof}[Proof of Corollary \ref{cor_Main2}]
If $t=gcd(d,q-1)=1$ then $w=(0,0, \dots, 0)$ is the only element in $W$ and $C(0)=1$. So, by Corollary \ref{cor_Main1}
\begin{multline*}
N_q(D_{d, \lambda, h}) 
= \frac{q^{n-1}-1}{q-1}
+\frac{(-1)^n}{q} \left(-1+
{_{d}G_{d}}\biggl[ \begin{array}{ccccc} 0 & \frac{1}{d} & \frac{2}{d} & \dotsc & \frac{d-1}{d} \\[4pt]
 \multicolumn{5}{c}{\dotsc \dotsc \frac{b_i}{h_i} \dotsc \dotsc} \end{array}
\Big| \; \lambda^d  h_1^{h_1}  \dots h_n^{h_n}  \; \biggr]_q \right)
\end{multline*}
The first bottom line parameter in ${_{d}G_{d}}$ is $\frac{0}{h_1}=0$. We will ``cancel'' the zero from both top and bottom to get the required ${_{d-1}G_{d-1}}$. 
From Definition \ref{def_Gq} we see that the contribution to the summand of the top and bottom line zero is
\begin{equation*}
\prod_{k=0}^{r-1} 
\frac{\biggfp{\langle (0-\frac{s}{q-1} )p^k \rangle}}{\biggfp{\langle 0 p^k \rangle}}
\frac{\biggfp{\langle (0+\frac{s}{q-1}) p^k \rangle}}{\biggfp{\langle 0 p^k\rangle}}
(-p)^{-\lfloor{\langle 0p^k \rangle -\frac{s p^k}{q-1}}\rfloor -\lfloor{\langle 0 p^k\rangle +\frac{s p^k}{q-1}}\rfloor}
\end{equation*}
which, by Theorem \ref{thm_GrossKoblitz} and (\ref{for_GaussConj}), equals
\begin{equation*}
g(\bar{\omega}^{-s}) g(\bar{\omega}^s)
=
\begin{cases}
\bar{\omega}^s(-1) \, q & \textup{if } s \neq 0,\\
1 &  \textup{if }  s= 0.
\end{cases}
\end{equation*}
We also note that when $s=0$ the summand in Definition \ref{def_Gq} equals 1.
Therefore,
\begin{equation*}
{_{d}G_{d}}
\biggl[ \begin{array}{cccc} 0, & a_2, & \dotsc, & a_n \\
 0, & b_2, & \dotsc, & b_n \end{array}
\Big| \; \lambda \; \biggr]_q
=1 + q \cdot {_{d-1}G_{d-1}}
\biggl[ \begin{array}{ccc} a_2, & \dotsc, & a_n \\
 b_2, & \dotsc, & b_n \end{array}
\Big| \; \lambda \; \biggr]_q
\end{equation*}
as required.
\end{proof}

\begin{proof}[Proof of Theorem \ref{thm_Dwork}]
We start from Corollary \ref{cor_thm2_Dwork} and proceed in the same fashion as the second half of the proof of Theorem 2.2 in \cite{McC12}. We let $T = \bar{\omega}$ and apply the Gross-Koblitz formula, Theorem \ref{thm_GrossKoblitz}, to get
\begin{equation}\label{for_Dwork_NDl}
N_q(D_{\lambda}) 
= \frac{q^{n-1}-1}{q-1}
+\frac{1}{q(q-1)} \sum_{[w] \in W/\sim_1} R_{[w]}
\end{equation}
where
\begin{multline*}
R_{[w]} 
=\sum_{s=0}^{q-2} (-1)^{n+1} (-p)^{v} \,
\bar{\omega}^{ns}(-n \lambda) \,
 \prod_{a=0}^{r-1} \biggfp{\langle (\tfrac{-ns}{q-1} ) p^a \rangle}\\
\times
\prod_{k \in S_w^c} 
\bar{\omega}(-1)^{k\frac{q-1}{t}+s} \, q 
\prod_{a=0}^{r-1}
\frac{\biggfp{\langle (\tfrac{k}{t} + \tfrac{s}{q-1} ) p^a \rangle}^{n_k-1}}{\biggfp{\langle (\tfrac{-k}{t} - \tfrac{s}{q-1}) p^a \rangle}}
\end{multline*}
with
\begin{multline*}
v=\sum_{k \in S_w^c}  \frac{n_k k}{t} \sum_{a=0}^{r-1} p^a
- \sum_{k \in S_w^c} (n_k-1) \sum_{a=0}^{r-1} \left\lfloor (\tfrac{k}{t}+\tfrac{s}{q-1})p^a \right\rfloor \\
+  \sum_{k \in S_w^c}  \sum_{a=0}^{r-1}  \left\lfloor (-\tfrac{k}{t}-\tfrac{s}{q-1})p^a \right\rfloor
-   \sum_{a=0}^{r-1} \left\lfloor (\tfrac{-ns}{q-1})p^a \right\rfloor.
\end{multline*}
As $p \nmid n$ we derive from (\ref{for_gammaMult4}) that
\begin{equation*}
\prod_{a=0}^{r-1} \biggfp{\langle (\tfrac{-ns}{q-1}) p^a \rangle} 
= \prod_{a=0}^{r-1}  \frac{\displaystyle\prod_{k=0}^{t-1} \biggfp{\langle (\tfrac{k}{t} - \tfrac{s}{q-1}) p^a \rangle}    \displaystyle\prod_{\substack{b=0 \\ b \not\equiv 0 \imod{\frac{n}{t}}}}^{n-1} \biggfp{\langle (\tfrac{b}{n} - \tfrac{s}{q-1}) p^a \rangle}}{\displaystyle\prod_{b=0}^{n-1} \biggfp{\langle (\tfrac{b}{n}) p^a \rangle}}
\, \bar{\omega}^{s} \bigl(n^{-n}\bigr).
\end{equation*}
So, after some manipulation,
\begin{multline}\label{for_DworkR}
R_{[w]} 
= (-1)^{n+1} \sum_{s=0}^{q-2} (-p)^{v} \,
\bar{\omega}^{ns}(-\lambda) \,
\left[ \prod_{k \in S_w} \prod_{a=0}^{r-1} \frac{\biggfp{\langle (\tfrac{t-k}{t} - \tfrac{s}{q-1} ) p^a \rangle}}{\biggfp{\langle \tfrac{t-k}{t} p^a \rangle}} \right] \\
\times
\left[ \prod_{\substack{b=0 \\ b \not\equiv 0 \imod{\frac{n}{t}}}}^{n-1} \prod_{a=0}^{r-1} \frac{\biggfp{\langle (\tfrac{b}{n} - \tfrac{s}{q-1}) p^a \rangle}}{\biggfp{\langle \tfrac{b}{n} p^a \rangle}} \right] 
\left[ \prod_{k \in S_w^c} \prod_{a=0}^{r-1} \frac{\biggfp{\langle (-\tfrac{t-k}{t} + \tfrac{s}{q-1}) p^a \rangle}^{n_k-1}}{\biggfp{\langle -\tfrac{t-k}{t} p^a \rangle}^{n_k-1}} \right]
F[w]
\end{multline}
where
\begin{equation*}
F[w]:=
\left[ \prod_{k \in S_w^c} \prod_{a=0}^{r-1} \biggfp{\langle (\tfrac{-k}{t}) p^a \rangle}\right]^{-1}
\left[ \prod_{k \in S_w^c} \prod_{a=0}^{r-1} \biggfp{\langle \tfrac{k}{t} p^a \rangle}^{n_k-1} \right]
\left[ \prod_{k \in S_w^c} \bar{\omega}(-1)^{k\frac{q-1}{t}+s} \, q \right].
\end{equation*}
Applying the Gross-Koblitz formula, Theorem \ref{thm_GrossKoblitz}, in reverse and  (\ref{for_GaussConj}) we get that
\begin{align*}
 \prod_{k \in S_w^c} \prod_{a=0}^{r-1}  \biggfp{\langle (\tfrac{-k}{t}) p^a \rangle}  \biggfp{\langle \tfrac{k}{t} p^a \rangle}
&=  \prod_{k \in S_w^c} g(\bar{\omega}^{-k \frac{q-1}{t}}) g(\bar{\omega}^{k \frac{q-1}{t}}) 
(-p)^{-\sum_{a=0}^{r-1} \langle (\tfrac{-k}{t}) p^a \rangle +\langle (\tfrac{k}{t}) p^a \rangle}\\
&=   (-1)^{r |S_w^c\setminus\{0\}|} \prod_{k \in S_w^c} \bar{\omega}(-1)^{k\frac{q-1}{t}}.
\end{align*}
Thus
\begin{equation}\label{for_DworkF}
F[w]=
(-1)^{r |S_w^c\setminus\{0\}|} \, q^{|S_w^c|} \,  \bar{\omega}(-1)^{s |S_w^c|}
 \prod_{k \in S_w^c} \prod_{a=0}^{r-1} \biggfp{\langle \tfrac{k}{t} p^a \rangle}^{n_k}.
\end{equation}
If we let
\begin{multline*}
-z =
\sum_{k \in S_w^c} (n_k-1) \sum_{a=0}^{r-1} \left\lfloor \langle -\tfrac{t-k}{t} p^a \rangle + \tfrac{sp^a}{q-1} \right\rfloor \\
+\sum_{k \in S_w} \sum_{a=0}^{r-1} \left\lfloor \langle \tfrac{t-k}{t} p^a \rangle - \tfrac{sp^a}{q-1} \right\rfloor
+ \sum_{\substack{b=0 \\ b \not\equiv 0 \imod{\frac{n}{t}}}}^{n-1} \left\lfloor \langle \tfrac{b}{n} p^a \rangle - \tfrac{sp^a}{q-1} \right\rfloor,
\end{multline*}
then, after a lengthy but straightforward calculation, we find that
\begin{equation}\label{for_Dwork_vz}
v-z=-r \,|S_w^c\setminus\{0\}| + \sum_{i=1}^{n} \sum_{a=0}^{r-1} \langle \tfrac{w_i}{t} p^a \rangle.
\end{equation}
Accounting for (\ref{for_DworkF}) and (\ref{for_Dwork_vz}) in (\ref{for_DworkR}), and then (\ref{for_Dwork_NDl}), yields the result.
\end{proof}


\section{Concluding Remarks}\label{sec_cr}
When ${d \mid q-1}$ it is possible express the results of Koblitz, and those in this paper, in terms of hypergeometric functions over finite fields, as defined by Greene \cite{G}, or using a normalized version defined by the author \cite{McC6}. For example, see \cite{Go2, McC12, N} for related results. To extend these results beyond $q \equiv 1 \pmod d$ it is necessary to move to the $p$-adic setting as we have done in this paper. Other results where the $p$-adic hypergeometric function, ${{_m}G_m}$, is used to count points on certain hypersurfaces, which are special cases of the results in this paper, can be found in \cite{BRS, Go}.



\end{document}